\documentclass[a4paper,12pt,draft]{article}

\oddsidemargin -1cm \flushbottom \headheight 0cm \textheight 25cm \textwidth 17.5cm \footskip 1cm
\evensidemargin
-1cm

\usepackage[top=1in,left=1in,right=1in,bottom=1.5in]{geometry}

\usepackage[T1]{fontenc}
\usepackage{amsmath,amssymb,amsfonts,amsthm,mathrsfs,eucal,dsfont,verbatim}

\theoremstyle{plain}
\newtheorem{theorem}{Theorem}
\newtheorem{lemma}[theorem]{Lemma}
\newtheorem{proposition}[theorem]{Proposition}

\theoremstyle{definition}
\newtheorem{remark}{Remark}

\newtheorem{example}{Example}


\usepackage{hyperref}
\usepackage{eucal,verbatim,mathrsfs,dsfont}

\usepackage{amssymb}
\usepackage{amsmath}
\usepackage{amscd}

\makeindex             

\def\eps{\varepsilon}

\renewcommand{\Re}{\mathds R}

\newcommand{\Mf}{\mathcal{M}}

\newcommand{\Ld}{\mathcal{L}\!\hbox{{\it d}}}
\newcommand{\real}{\mathds{R}}

\begin{document}

\title{Asymptotic behaviour of the
distribution density of the fractional L\'evy motion}

\author{%
    \textsc{Victoria Knopova}%
    \thanks{  V.M.\ Glushkov Institute of Cybernetics,
            NAS of Ukraine,
            40, Acad.\ Glushkov Ave.,
            03187, Kiev, Ukraine,
            \texttt{vic\underline{ }knopova@gmx.de}}
    \textrm{\ \ and\ \ }
    \stepcounter{footnote}\stepcounter{footnote}\stepcounter{footnote}
    \stepcounter{footnote}\stepcounter{footnote}%
    \textsc{Alexei Kulik}%
    \thanks{Institute of Mathematics, NAS of Ukraine, 3, Tereshchenkivska str., 01601  Kiev, Ukraine,
    \texttt{kulik@imath.kiev.ua}}
    }

\date{}

\maketitle

\begin{abstract}
We investigate the  distribution properties of the fractional L\'evy motion  defined by the Mandelbrot-Van Ness representation
$$
Z^H_t :=\int_\Re f(t,s) dZ_s,
$$
where $Z_s$, $s\in \Re$, is a (two-sided) real-valued L\'evy process,  and
   $$
    f(t,s) := \frac{1}{\Gamma(H+1/2)} \left[(t-s)_+^{H-1/2}-(-s)^{H-1/2}_+\right], \quad t,s\in \Re.
 $$
    We consider separately the  cases $0<H<1/2$ (\emph{short memory}) and $1/2<H<1$ (\emph{long memory}), where $H$ is the Hurst parameter, and present the asymptotic behaviour  of  the distribution density of the process. Some  examples are provided, in which it is shown that the behaviour of the density in the cases $0<H<1/2$  and $1/2<H<1$ is completely different.
\end{abstract}


\section{Introduction}

In this paper we consider the distribution properties of the \emph{fractional L\'evy motion} (FLM, in the sequel). Various versions of the FLM have been used in a number of recent publications in order to interpret some experimental data. Apart from  the rigorous mathematical definition, some  modifications   of  the FLM  are derived  from the physical point of view,  see, example, \cite{H99},  \cite{CSC09}, \cite{ES12}.  The FLM driven by an $\alpha$-stable L\'evy process is used as a model for describing sub-diffusive effects in physic and biology  (see \cite{BW10}, \cite{WBMW05}), signal and traffic modeling \cite{LLHD02}, \cite{KM96}, \cite{DM05},  finance \cite{BSV08}, geophysics \cite{PP94}, \cite{W05}, \cite{GLS06}. We refer to \cite{W05} for the discussion in which type of problems the FLM gives an adequate description for the observed phenomena. In the papers quoted above it was shown that the respective phenomena existing in nature can be better described by models, containing the FLM rather than  the fractional Brownian motion (FBM). Finally, we refer to \cite{CG00} for simulations of the FLM, which can be convenient in practical problems.

Similarly to the FBM, the FLM can be defined in two different ways:  via the Mandel-brot-Van Ness representation (see \cite{BCI04} and \cite{M06}), or via the Molchanov-Golosov representation (see \cite{BM09}).  We also refer to  \cite{ST94} for a bit different definition of the fractional stable motion.  These two representations, being equivalent in the Gaussian setting, in the L\'evy setting lead, in general, to different processes;  see \cite{TM11}. Note that, in contrast to the FBM, in some cases the FLM can even be a semi-martingale (\cite{BP09}, \cite{BLS12}).

  In this paper we focus on the FLM  $Z^H_t$ defined by the Mandelbrot-Van Ness representation, i.e.
    \begin{equation}
    Z^H_t :=\int_\Re f(t,s) dZ_s, \label{yt}
    \end{equation}
where $Z_s$, $s\in \Re$, is a (two-sided) real-valued L\'evy process, $H\in (0,1)$ is the \emph{Hurst parameter}, and
    \begin{equation}
    f(t,s) := \frac{1}{\Gamma(H+1/2)} \left[(t-s)_+^{H-1/2}-(-s)^{H-1/2}_+\right], \quad t,s\in \Re,\label{f1}
    \end{equation}
 where  $x_+:=\max(x,0)$. This definition gives a particularly important representative of the class of so-called \emph{moving-average fractional L\'evy motions}.  Since  the FLM, according to the survey above, is an adequate model to some phenomena in nature, it would be appropriate to investigate deeply its properties. In particular, knowledge of the distribution properties of the FLM would naturally make it possible to solve various problems related to statistical inference, simulation, etc.

In this paper we concentrate on the asymptotic behaviour of the distribution density of the FLM. In contrast to the FBM case, the study  of the distribution density of the FLM is much more complicated. In the recent paper \cite{KK11} we presented the investigation of the distribution density of a FLM in the following cases: (i) $H=1/2$, which means merely $Z^H_t\equiv Z_t$, and  (ii) $1/2<H<1$, which corresponds to the so-called \emph{long memory case}, see Definition 1.1 in \cite{M06}. Both cases can be treated in a unified way using a general result about the asymptotic behaviour of  distribution densities of L\'evy driven stochastic integrals with deterministic kernels, see Theorem 2.1 in \cite{KK11}. Since  this theorem requires the respective kernel to be bounded, and the kernel (\ref{f1}) is unbounded when $0<H<1/2$, the case (iii) $0<H<1/2$ cannot be treated in the same way as in \cite{KK11}, and thus requires a completely different approach, which we present below. Further, we show that there is a substantial difference in the behaviour of the density in  the cases (i), (ii) on one hand,  and   the case (iii) on the other hand, namely, the distribution density in these  situations   exhibits absolutely different types of the asymptotic behaviour. We also emphasize, that in contrast to the situation studied in \cite{KK11}, for the case $0<H<1/2$  we do not require the existence of exponential moments of the tails of the L\'evy measure.

Let us outline the rest of the paper. In Section~\ref{s2} we set the notation and formulate our main results, Theorems~\ref{t-main1} and \ref{t-main2}. In Section \ref{s3} we formulate the general result,  Theorem~\ref{td1}, on the asymptotic behaviour of the distribution density of L\'evy functionals, on which Theorems~\ref{t-main1} and \ref{t-main2} are based on.  In Section \ref{s4} we  give two examples which illustrate the effects that may happen in the ``extremely heavy-tailed'' case, i.e. when condition \eqref{m2r} (see below)  fails. In Appendix we give some supplementary statements: the necessary and sufficient condition for the integral (\ref{yt}) to be well defined,  and the condition for the respective distribution to possess a density.

\section{Settings and the main result}\label{s2}
Let $Z_t$, $t\in \Re$,  be a real-valued (two-sided) \emph{L\'evy process}  with
\emph{the characteristic exponent} $\psi$, which means  that $Z$ has stationary  independent increments, and the characteristic function of an increment is given by
\begin{equation}
E e^{i z (Z_t-Z_s)} =e^{(t-s)\psi(z)}, \quad t>s. \label{char}
\end{equation}
The characteristic exponent  $\psi$  admits the
L\'evy-Khinchin representation
\begin{equation}\label{LKh}
\psi(z)=iaz - b z^2+\int_{\real}
 \left( e^{iuz}-1- iz u 1_{\{|u|\leq 1\}} \right)\mu(du),
\end{equation}
where $a\in\real$, $b\geq 0,$ and $\mu(\cdot)$ is \emph{a L\'evy
measure}, i.e. $\int_{\real} (1\wedge u^2)\mu(du)<\infty$. To  exclude  the trivial cases, we assume that $b=0$ and $\mu(\Re)>0$; that is,  $Z$ does not contain a diffusion part, and contains a non-trivial jump part. To simplify the notation we also assume without loss of generality that  $Z_0=0$ and $a=0$.

We define the integral (\ref{yt}) as a limit in probability of the respective integral sums, see \cite[Section 2]{RR89}. When $H\not=1/2$, the necessary and sufficient condition  for this integral to be well defined is
    \begin{equation}
    \int_{|u|\geq 1} |u|^{2/(3-2H)}  \mu(du)<\infty, \label{esint}
    \end{equation}
see Proposition \ref{p21} in Appendix~I. Furthermore, it will be shown in Proposition~\ref{p22} (see Appendix~II) that under the   same conditions  and
 our standing  assumption
\begin{equation}
\mu(\Re)>0,\label{mu0}
 \end{equation}
the integral (\ref{yt}) possesses for any $t\not=0$  a distribution density, which we denote by  $p_t(x)$, and, moreover, $p_t\in C_b^\infty(\real)$. Note that in the L\'evy case, i.e. for $H=1/2$, available  sufficient conditions  for the existence of the density $p_t\in C_b^\infty(\real)$ are much stronger, see, for example,  \cite{HW42}  for  the
\emph{ Hartman-Wintner condition}.

An important feature of the process $Z_t^H$ is that one can explicitly write its characteristic function $\phi(t,z):=Ee^{izZ^H_t}$ (cf.  \cite[Theorem 2.7]{RR89}):
\begin{equation}\label{chint}
 \phi(t,z)=e^{\Psi(t,-z)},
 \end{equation}
 where
 \begin{equation}
 \Psi(t,z)=
 \int_\Re \int_{\Re}\left(e^{-izf(t,s)u}-1+izf(t,s)u 1_{|u|\leq 1}\right)\,\mu(du)ds, \quad z\in\real, \quad t>0.\label{psi1}
\end{equation}
Observe, that if the measure $\mu$ possesses exponential moments, the function $\Psi(t,z)$ can be  extended with respect to $z$ to the complex plane. Moreover, one can see (cf. Section~\ref{3.2}) under the assumptions that $\mu(\real_+)>0$,  the function
$$
H(x,z):= izx+\Psi(1,z)
$$
has  a unique critical point $i\xi(x)$ on the line $i\real$. Put
\begin{equation}
D(x):=H(x,i\xi(x)),\quad
K(x):=\frac{\partial^2}{\partial \xi^2} H(x,i\xi)\Big|_{\xi=\xi(x)},\label{DH}
\end{equation}
and
$$
M_k(\xi):= \int_\real u^k e^{\xi u}\mu(du), \quad k\geq 2, \quad \xi\in \Re.
$$

Fix $t_0>0$. In what follows, we write $f\ll g,$  if $f/g\to 0,$ and $f\sim g$, if $f/g\to 1$.

\begin{theorem}\label{t-main1} Let $Z_t^H$, $1/2< H<1$, $t\geq t_0$, be a FLM defined by (\ref{yt}), where $Z_t$ is  a L\'evy process with the  associate L\'evy measure $\mu$.
Suppose that the conditions below hold true:

1) $\mu(\real^+)>0$;

2) for all $C\in \real$
\begin{equation}
\int_{|y|\geq 1} e^{Cy} \mu(dy)<\infty; \label{exp}
\end{equation}

3) $\exists \gamma\in (0,1)$ such that $M_4(\xi)\ll M_2^2(\gamma \xi)$ as $\xi \to \infty$;

4) $\ln\Big(\frac{M_4(\xi)}{M_2(\xi)}\vee 1\Big) +\ln \ln
M_2(\xi)\ll \xi$, $\xi\to+\infty$.

Then the distribution density $p_t(x)$ of $Z_t^H$ exists, $p_t\in C_b^\infty$, and satisfies  the asymptotic relation
\begin{equation}
p_t(x)\sim \frac{1}{\sqrt{2\pi t^{2H}  K(xt^{-H-1/2})}} e^{tD(xt^{-H-1/2})}, \quad t+x\to\infty,\quad (t,x)\in [t_0,\infty)\times \real_+,\label{ptx1}
\end{equation}
where $D$ and $K$ are defined in  (\ref{DH}).
\end{theorem}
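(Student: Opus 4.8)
The plan is to exploit the explicit characteristic function (\ref{chint})--(\ref{psi1}) together with the self-similar scaling of the kernel (\ref{f1}), reduce the two-parameter family $\{p_t\}$ to a single large-parameter integral, and finish by a saddle-point (Laplace) analysis. The starting observation is that, for $t>0$, the substitution $s=tr$ in (\ref{psi1}) together with the homogeneity relation $f(t,tr)=t^{H-1/2}f(1,r)$ (which follows at once from $(t-tr)_+=t(1-r)_+$ and $(-tr)_+=t(-r)_+$) yields the exact scaling identity
\begin{equation}
\Psi(t,z)=t\,\Psi\bigl(1,t^{H-1/2}z\bigr),\qquad z\in\real,\ t>0.\label{sketch-scaling}
\end{equation}
Since the compensator $1_{\{|u|\le1\}}$ is untouched by the change of variables, (\ref{sketch-scaling}) holds with no correction term, and it reduces the asymptotics of all $p_t$ to the behaviour of one object built from $\Psi(1,\cdot)$.

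First I would insert (\ref{sketch-scaling}) into the inversion formula $p_t(x)=\frac{1}{2\pi}\int_\Re e^{-izx}\phi(t,z)\,dz$ and substitute $w=t^{H-1/2}z$; a short computation converts the exponent into $t\,H(\tilde x,-w)$ with $H(x,z)=izx+\Psi(1,z)$, giving
\begin{equation}
p_t(x)=\frac{t^{1/2-H}}{2\pi}\int_\Re \exp\bigl(t\,H(\tilde x,-w)\bigr)\,dw,\qquad \tilde x:=xt^{-H-1/2}.\label{sketch-inv}
\end{equation}
Here the large parameter is $t$ and the whole $x$-dependence is carried by the scaled variable $\tilde x$. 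For $1/2<H<1$ the kernel $f(1,\cdot)$ is bounded (the singularity present for $H<1/2$ disappears), so (\ref{sketch-inv}) is exactly of the type handled by the general saddle-point theorem of Section~\ref{s3} (Theorem~\ref{td1}); the bulk of the work is to verify its hypotheses for the present $\Psi(1,\cdot)$. Condition~1), $\mu(\real^+)>0$, supplies the unique critical point $w_*=-i\xi(\tilde x)$ on the imaginary axis, and condition~2) guarantees that $\Psi(1,\cdot)$ extends analytically far enough to justify deforming the contour through $w_*$.

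The core computation is the Laplace asymptotics of (\ref{sketch-inv}). Writing $\Phi(w):=H(\tilde x,-w)$, one has by (\ref{DH}) that $\Phi(w_*)=D(\tilde x)$ and $\Phi''(w_*)=-K(\tilde x)$. Shifting the contour to the horizontal line through $w_*$ and expanding $\Phi(w_*+v)=D(\tilde x)-\tfrac12K(\tilde x)v^2+O(v^3)$ for real $v$, the frozen quadratic part produces the Gaussian integral $\sqrt{2\pi/(tK(\tilde x))}$; together with the prefactor $t^{1/2-H}/(2\pi)$ this reproduces precisely the right-hand side of (\ref{ptx1}).

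The main obstacle is to make this approximation rigorous and, above all, uniform over the full regime $t+x\to\infty$, $(t,x)\in[t_0,\infty)\times\real_+$, in which $\tilde x$ may tend to $0$ (if $t\to\infty$ with $x$ bounded) or to $+\infty$ (if $x\to\infty$ with $t$ bounded), so that $\xi(\tilde x)$ ranges over all of $[0,\infty)$. This demands, first, a bound showing that the part of the integral away from a shrinking neighbourhood of $w_*$ is exponentially negligible, and second, control of the cubic and quartic remainders relative to the Gaussian width $\sim(tK(\tilde x))^{-1/2}$. These remainders are expressed through the moment functions $M_k(\xi)$, and this is exactly the role of conditions~3) and~4): condition~3), $M_4(\xi)\ll M_2^2(\gamma\xi)$, keeps the fourth-order (Edgeworth-type) correction subordinate to the leading term, while condition~4) bounds the logarithmic growth so that the neighbourhood on which the quadratic approximation is valid remains wide enough as $\xi(\tilde x)\to\infty$. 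Carrying out these uniform estimates, and matching them against the hypotheses of Theorem~\ref{td1}, is the technical heart of the argument.
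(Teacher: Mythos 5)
Your proposal follows essentially the same route as the paper: the paper proves Theorem~\ref{t-main1} by combining the scaling property (\ref{f}) (equivalently, your identity $\Psi(t,z)=t\,\Psi(1,t^{H-1/2}z)$) with the general saddle-point result, Theorem~\ref{td1}, applied with $\mathcal{C}(t,s)\equiv\real$, conditions 1)--4) serving exactly to verify its hypotheses as you describe (with the details delegated to \cite{KK11}). Your reduction (\ref{sketch-inv}) and the resulting prefactor $t^{1/2-H}\sqrt{2\pi/(tK(\tilde x))}/(2\pi)=(2\pi t^{2H}K(\tilde x))^{-1/2}$ correctly reproduce (\ref{ptx1}).
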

To formulate the result for $0<H<1/2$ we need a bit more notation.
Let $f(s):=f(1,s)$, see (\ref{f1}) for the definition of $f(t,s)$. Observe that $f(s)$  is monotone decreasing on $(-\infty,0)$,  monotone increasing on $(0,1]$, and  the range of $f$ restricted, respectively,  to $(-\infty, 0)$ and  $(0,1]$,  is  $(-\infty,0)$ and   $\big[\frac{1}{\Gamma(H+1/2)},+\infty \big)$.    In addition, the derivative $f'(s)$ is well defined and is continuous on $(-\infty,0)$ and $(0,1)$.   Hence, we can put
\begin{equation}
\ell (y):=
\begin{cases}
(f^{-1})'(y)=\frac{1}{f'(f^{-1}(y))}, \quad  &y\in (-\infty, 0)\cup \Big[\frac{1}{\Gamma(H+1/2)},+\infty\Big),
\\
0,\quad  &\hbox{otherwise}.
\end{cases} \label{ell}
\end{equation}
Note that $\ell(y)$ is non-negative if $y\geq 0$, and is negative otherwise. Define
\begin{equation}
\mathfrak{m}(r):=  \int_{-\infty}^{\infty} \frac{1}{y} \ell\left(\frac{r}{y}\right)\mu(dy), \quad r>0. \label{m}
\end{equation}

Recall that (see Definition 4 in \cite{Kl89}) a function $g:\Re_+\to \Re_+$  belongs to the class $\Ld$ of \emph{sub-exponential densities}, if $g(x)>0$ for large enough positive $x$, and
\begin{equation}
 \lim_{x\to+\infty} \frac{(g*g)(x)}{g(x)}=2, \quad \text{and} \quad \lim_{x\to+\infty} \frac{g(x-y)}{g(x)}=1 \text{ for any $y
\in \Re$,} \label{m3r}
\end{equation}
where $*$ is a usual definition for the convolution. Fix $t>0$.
\begin{theorem}\label{t-main2}
Let $Z_t^H$, $0< H<1/2$,  be a FLM defined by (\ref{yt}), where $Z_t$ is  a L\'evy process with the  associate L\'evy measure $\mu$.
Suppose that (\ref{esint}) holds true and
\begin{equation}
    \mu(\Re_-)>0. \label{mu1}
    \end{equation}
Then for every $t>0$ the value of FLM $Z_t^H$ defined by (\ref{yt}) possesses a  probability density $p_t\in C_b^\infty(\real),$ which satisfies the following:

i)  if
\begin{equation*}
 \int_{|u|\geq 1} |u|^{2/(1-2H)}  \mu(du)=\infty
\end{equation*}
     and $\mathfrak{m}\in \Ld$, then  for all $t>0$
    \begin{equation}
    p_t(x)\sim t^{3/2-H} \mathfrak{m}(t^{1/2-H}x), \quad  x\to +\infty;  \label{main-es2}
    \end{equation}

ii)  if
\begin{equation}
 \int_{|u|\geq 1} |u|^{2/(1-2H)}  \mu(du)<\infty,\label{m2r}
\end{equation}
then
\begin{equation}
    p_t(x)\sim c_H \left(\int_\Re |u|^{2/(1-2H)} \mu(du)\right)x^{-{(3-2H)/(1-2H)}},\quad   x\to +\infty,  \label{main-es3}
    \end{equation}
where
 \begin{equation}
 \label{CH}
c_H=\frac{2}{1-2H}\left(\Gamma\Big(H+1/2\Big)\right)^{-2/(1-2H)}.
\end{equation}

\end{theorem}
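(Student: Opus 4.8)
The plan is to represent $Z^H_t$ as an infinitely divisible random variable, to identify the density of its L\'evy measure, and then to invoke the general density transfer of Theorem~\ref{td1}. Concretely, in (\ref{psi1}) I would push the planar measure $ds\,\mu(du)$ forward under the map $(s,u)\mapsto v=f(t,s)u$. Because $f(t,\cdot)$ is piecewise monotone with inverse derivative encoded by $\ell$ in (\ref{ell}), the image measure $\nu_t$ has, on $\real_+$, the density $\mathfrak{m}_t$ given by the $t$-analogue of (\ref{m}); hence $\Psi(t,\cdot)$ is the L\'evy exponent of $\nu_t$ (the mismatch between the truncations $1_{|u|\le1}$ and $1_{|v|\le1}$ only changes a deterministic drift), and $Z^H_t$ is infinitely divisible with L\'evy measure $\nu_t$. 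Existence and smoothness of $p_t$ are already provided by Proposition~\ref{p22}. The self-similar scaling $f(t,ts)=t^{H-1/2}f(1,s)$, immediate from (\ref{f1}), shows that $\nu_t$ is the $t$-fold dilation of $\nu_1$ by the factor $t^{H-1/2}$, which gives
\[
\mathfrak{m}_t(r)=t^{3/2-H}\,\mathfrak{m}\!\left(t^{1/2-H}r\right),\qquad r>0,
\]
and reduces everything to the behaviour of $\mathfrak{m}=\mathfrak{m}_1$. The transfer principle I rely on is: if the right L\'evy density $\mathfrak{m}_t$ lies in the subexponential class $\Ld$, then $p_t(x)\sim\mathfrak{m}_t(x)$ as $x\to+\infty$; this is the content of Theorem~\ref{td1}, and it uses no exponential moments.

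Part (i) then follows immediately: $\mathfrak{m}\in\Ld$ is assumed and $\Ld$ is invariant under the dilation $r\mapsto t^{1/2-H}r$, so $\mathfrak{m}_t\in\Ld$ and $p_t(x)\sim\mathfrak{m}_t(x)=t^{3/2-H}\mathfrak{m}(t^{1/2-H}x)$, which is exactly (\ref{main-es2}).

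For part (ii) the task is to extract the tail of $\mathfrak{m}$ from the singularity of the kernel. Writing $\beta:=2/(1-2H)$, I would invert $f(1,s)=\Gamma(H+1/2)^{-1}(1-s)^{H-1/2}$ as $s\uparrow1$ and, symmetrically, $f(1,s)\approx-\Gamma(H+1/2)^{-1}(-s)^{H-1/2}$ as $s\uparrow0$, to obtain the two-sided power law $\ell(z)\sim c_H|z|^{-\beta-1}$ as $z\to+\infty$ and $\ell(z)\sim -c_H|z|^{-\beta-1}$ as $z\to-\infty$, where $c_H=\beta\,\Gamma(H+1/2)^{-\beta}$ coincides with (\ref{CH}). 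Substituting into (\ref{m}), for each fixed $y$ one has $r^{\beta+1}\,\tfrac1y\ell(r/y)=|y|^{\beta}\,\bigl(z\,|z|^{\beta}\ell(z)\bigr)\big|_{z=r/y}\to c_H|y|^{\beta}$ as $r\to+\infty$, while the left-hand side is bounded by $M|y|^{\beta}$ for a constant $M$; since $\int_{\real}|u|^{\beta}\mu(du)<\infty$ by (\ref{m2r}), dominated convergence yields
\[
\mathfrak{m}(r)\sim c_H\Big(\int_{\real}|u|^{\beta}\mu(du)\Big)\,r^{-\beta-1},\qquad r\to+\infty,
\]
with $\beta+1=(3-2H)/(1-2H)$. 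A power law belongs to $\Ld$, so the transfer principle applies; finally $(1/2-H)(\beta+1)=3/2-H$ forces the two powers of $t$ in $\mathfrak{m}_t(x)=t^{3/2-H}\mathfrak{m}(t^{1/2-H}x)$ to cancel exactly, collapsing $p_t(x)\sim\mathfrak{m}_t(x)$ to the $t$-independent right-hand side of (\ref{main-es3}).

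The crux is the dominated-convergence step, and the boundedness of the renormalized integrand $z\mapsto z\,|z|^{\beta}\ell(z)$ that it requires. As $|z|\to\infty$ this tends to $c_H$ by the singularity asymptotics above; the subtle region is $z\to0^-$, which corresponds to $|y|\gg r$ and to $s\to-\infty$ in the kernel, where $\ell(z)$ blows up like $|z|^{-\gamma-1}$ with $\gamma:=2/(3-2H)$. Boundedness near $0^-$ then hinges on $\gamma<\beta$ (so that $|z|^{\beta-\gamma}\to0$), which holds automatically, while finiteness of the majorant's integral against $\mu$ is guaranteed precisely by the moment condition (\ref{m2r}); note that the weaker exponent $\gamma$ is exactly the existence condition (\ref{esint}). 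Establishing these two-sided asymptotics of $f^{-1}$ rigorously, verifying that $\nu_t$ is a genuine L\'evy measure (integrable against $1\wedge v^2$, again via (\ref{esint})), and checking that $\mathfrak{m}_t$ meets the hypotheses of Theorem~\ref{td1} are where the technical work concentrates.
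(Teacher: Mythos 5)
There is a genuine gap, and it sits exactly at the step you call the ``transfer principle''. Theorem~\ref{td1} is \emph{not} a statement of the form ``subexponential L\'evy density implies $p_t(x)\sim\mathfrak{m}_t(x)$''. It is a saddle-point theorem: its hypothesis \textbf{H1} includes the boundedness condition (\ref{H1}), $f(t,s)u\leq\lambda$ on $\mathcal{C}(t,s)$, and its conclusion (\ref{ttp}) is an \emph{exponentially decaying} asymptotic $q_t(x)\sim(2\pi\mathcal{K}(t,x))^{-1/2}e^{\mathcal{D}(t,x)}$. It can never produce the heavy-tailed right-hand sides of (\ref{main-es2})--(\ref{main-es3}), and it does not apply to the full kernel precisely because $f(t,s)$ is unbounded for $H<1/2$. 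So the single sentence on which both parts of your argument rest invokes the wrong theorem, and no substitute argument is supplied. The correct route (which is the paper's) is to split the L\'evy measure of $Z^H_t$ at the level $\lambda$: the part with $f(t,s)u\leq\lambda$ yields a density $\tilde p_t$ to which Theorem~\ref{td1} \emph{does} apply and which is shown to decay like $\exp(-c\,x\ln x)$ (Proposition~\ref{pr1}, estimate (\ref{es1})); the part with $f(t,s)u>\lambda$ is a compound Poisson law whose intensity has density $m_t$, and Kl\"uppelberg's Theorem~3.2 on random sums of subexponential densities gives $\rho_t(x)\sim m_t(x)$; finally $p_t=\tilde p_t*P_t$ and the super-exponential decay of $\tilde p_t$ lets the subexponential term dominate the convolution. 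Without the saddle-point control of the small-jump component you have no way to rule out that it contributes to the tail, and without the random-sum theorem you have no way to pass from the L\'evy density to the density of the compound Poisson part; neither step is automatic, and together they constitute essentially all of the analytic work.

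The parts of your proposal that do not depend on this step are sound and coincide with the paper's: the identification of the image measure with density $\mathfrak{m}_t$ via $\ell$ (Lemma~\ref{mt}), the scaling $\mathfrak{m}_t(r)=t^{3/2-H}\mathfrak{m}(t^{1/2-H}r)$, and the part~(ii) computation. Your dominated-convergence derivation of $\mathfrak{m}(r)\sim c_H\bigl(\int_\real|u|^{\beta}\mu(du)\bigr)r^{-\beta-1}$ with $\beta=2/(1-2H)$, including the observation that the dangerous region is $z\to0-$ where $\ell(z)\asymp|z|^{-\gamma-1}$ with $\gamma=2/(3-2H)<\beta$, is a clean and slightly more economical version of Lemma~\ref{mx2} (which splits the integral into three ranges by hand). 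But this only establishes the asymptotics of $\mathfrak{m}$; it does not close the gap above, which is where the proof actually lives.
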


\begin{remark}\label{rem1} a) Apparently,  relation (\ref{main-es2}) holds  true when both $x$ and $t$ tend to $+\infty$ in such a way that  $xt^{-H-1/2}\to +\infty$. To prove such an extension of Theorem \ref{t-main2}, one should have an extension of  \cite[Theorem 3.2]{Kl89} which applies to \emph{a family} of random sums with the variable distribution of the number of summands.

b)  Clearly, results similar to (\ref{main-es2}) and (\ref{main-es3}) can be formulated for $x\to -\infty$. In that case  one should assume $\mu(\Re_+)>0$ instead of $\mu(\Re_-)>0$.
\end{remark}

To illustrate  the crucial difference between the cases treated in Theorem~\ref{t-main1} and Theorem~\ref{t-main2},    consider  two particular examples from  \cite{KK11} which concern the case $H> 1/2$. First, let the L\'evy measure $\mu$ of the L\'evy noise $Z_t$ in (\ref{yt}) be supported in a bounded set. Then (see in \cite[Corollary 5.1 and Corollary 5.2]{KK11})  there exists a constant $c_*(\mu)$, defined in terms of  the L\'evy measure $\mu$ only, such that for any constants $c_1>c_*(\mu)$ and $c_2<c_*(\mu)$ there exists $y(c_1,c_2)$ such that for  $x>y(c_1, c_2)t^{H+1/2}$ we have
 \begin{equation}\label{answ721}
 p_t(x)\quad\begin{cases}\geq \exp\left(-{c_1x \over \Gamma(H+1/2)t^{H-1/2}} \ln \left(x\over t^{H+1/2}\right)\right), \\  \leq \exp\left(-{c_2x\over \Gamma(H+1/2)t^{H-1/2}} \ln \left(x\over t^{H+1/2}\right)\right).
 \end{cases}
\end{equation}
Similar statement is available as well when the tails of the L\'evy measure admit the following super-exponential estimates: for $u$ large enough
\begin{equation}\label{mu11}
{1\over Q(u)}e^{-b u^\beta}\leq \mu([u,+\infty))\leq Q(u)e^{-b u^\beta},
\end{equation}
where $b>0$ and  $\beta>1$ are some constants,  and $Q$ is some polynomial. In  this   case, instead of (\ref{answ721}) we have  for any constants $c_1>c_*(\mu)$ and $c_2<c_*(\mu)$
\begin{equation}
\label{answ722}
 p_t(x)\quad
 \begin{cases}\geq
 \exp\left(-{c_1x \over \Gamma(H+1/2)t^{H-1/2}} \ln^{\beta-1\over \beta} \left(x\over t^{H+1/2}\right)\right)
 \\
 \leq \exp\left(-{c_2x\over \Gamma(H+1/2)t^{H-1/2}} \ln^{\beta-1\over \beta} \left(x\over t^{H+1/2}\right)\right),
 \end{cases}
\end{equation}
for  $x>y(c_1, c_2)t^{H+1/2}$ (again, $c_*(\mu)$ is defined in terms of  the L\'evy measure $\mu$ only).

Comparing (\ref{answ721}) and (\ref{answ722}), we see that  the asymptotic behaviour of the  tails of the L\'evy measure $\mu$ is substantially involved in the estimates for  $p_t(x)$.   The case $0<H<1/2$ is completely different.  In particular, if \eqref{m2r} holds true, then $p_t(x)$ satisfies (\ref{main-es3}),  where the right-hand side is even independent of $t$, which is an interesting and quite an unexpected fact. We also
 emphasize that under (\ref{m2r}) the polynomial ``shape'' of the expression in the right-hand side of (\ref{main-es3}) does not depend on $\mu$,  and the only impact of $\mu$ is represented by the multiplier  $\int_\Re |u|^{2/(1-2H)} \mu(du)$. This means that in the case $0<H<1/2$  the asymptotic behaviour of $p_t(x)$ ``mostly'' does not depend on $\mu$.  However, when $\mu$ is ``extremely heavy-tailed'', there still remains a possibility for the density $p_t(x)$ to be more sensitive with respect to both the L\'evy measure $\mu$ and the time parameter $t$. The dichotomy between the ``regular'' case (when (\ref{m2r}) holds) and the ``extremely heavy-tailed'' case (when (\ref{m2r}) fails) is illustrated in  Section \ref{s4} below. Such a dichotomy can be informally explained by the competition between the impacts of the kernel $f(t,s)$ on one hand, and of the measure $\mu$ on the other hand.

\section{Proofs} \label{s3}

\subsection{General theorem} Before we proceed to  the proofs, we formulate a central analytical result on the behaviour of the inverse Fourier transform for a certain class of functions. This result plays the key role in   the proofs of Theorems~\ref{t-main1} and \ref{t-main2}.

 Let  $I\subset \Re$ be some interval and $\mathbb{T}$ be some set of parameters.  Consider a function  $f:\mathbb{T}\times I\to \Re$, a family of subsets $\mathcal{C}(t,s)\subset \Re$, $t\in \mathbb{T}$, $s\in I$, and a L\'evy measure $\mu$ such that
  \begin{equation}
 \int_I f^2(t,s)\, ds<\infty, \quad t\in \mathbb{T},\label{f11}
 \end{equation}
 \begin{equation}
\int_I \int_{\mathcal{C}(t,s)} \left(  |f(t,s)u|^2\wedge 1\right) \mu(du)ds<\infty,\label{exi1A}
\end{equation}
 \begin{equation}
   \int_I \left|\int_{\mathcal{C}(t,s)} \left( f(t,s)u 1_{|f(t,s)u|\leq 1} - f(t,s)u 1_{|u|\leq 1}\right) \mu(du)\right| ds<\infty.\label{exi2A}
    \end{equation}
Then the following function is well defined:
 \begin{equation}
 \Psi(t,z)= \int_I \int_{\mathcal{C}(t,s) } \big( e^{-izf(t,s)u} -1+izf(t,s)u 1_{|u|\leq 1} \big) \mu(du)ds, \quad z\in \real. \label{phi1}
 \end{equation}

   Our aim is to investigate the asymptotic behaviour of the function (provided it exists)
   \begin{equation}
  q_t(x)=  (2\pi)^{-1} \int_\real e^{ixz}\phi(t,z)dz=(2\pi)^{-1} \int_\real e^{-ixz+ \Psi(t,z)}dz, \quad t>0, \quad x\in \real, \label{q1}
   \end{equation}
as $(t,x)$ tend to infinity in some appropriate regions. Clearly, when $\mathcal{C}(t,s) \equiv \real$, the function $q_t(x)$ is nothing else but the distribution density of the L\'evy functional
\begin{equation}
Y_t= \int_I f(t,s) dZ_s,
\end{equation}
where $Z_s$ is the L\'evy process associated with measure $\mu$, without a drift and a Gaussian component.

Assume in addition that for some $\lambda>0$ we have
\begin{equation}\label{H1}
f(t,s)u\leq \lambda, \quad t\in \mathbb{T}, \quad s\in I, \quad u\in \mathcal{C}(t,s).
    \end{equation}
Then it can be shown that the function $\Psi(t, \cdot)$ defined in  (\ref{phi1}) can be extended to the half-plane  $\mathbb{C}_+:=\{z\in \mathbb{C}:\mathrm{Im}\, z\geq 0\}$, and respective extension (we denote it by the same letter $\Psi$) is continuous on $\mathbb{C}_+$ and analytical in the inner part of this half-plane.

Consider  the function
\begin{equation}
    H(t,x,z):=izx+\Psi(t,z), \quad  z\in \mathbb{C}_+, \label{H}
    \end{equation}
and observe that
$$
 \frac{\partial}{\partial \xi} H(t,x,i\xi)=-x+ \int_I \int_{\mathcal{C}(t,s) } u f(t,s)\big( e^{\xi f(t,s)u} - 1_{|u|\leq 1} \big) \mu(du)ds\to \infty, \quad \xi\to +\infty,
 $$
 provided that
\begin{equation}\label{non-zero}
(\mu\times \mathrm{Leb})\Big(\{(u,s): f(t,s)u>0\}\Big)>0.
\end{equation}
Furthermore, under the same condition $\frac{\partial^2}{\partial \xi^2} H(t,x,i\xi)>0$ for all $\xi\in \real$. Hence,
$$
(0, \infty)\ni\xi\mapsto \frac{\partial}{\partial \xi} H(t,x,i\xi)
$$
is a continuous strictly increasing function with the range $(x_t-x, \infty)$, where
\begin{equation}\label{x_*}
x_t=\lim_{\xi\to 0+}\Psi(t, i\xi)=\int_I \int_{\mathcal{C}(t,s) } u f(t,s)1_{|u|>1}  \mu(du)ds.
\end{equation}
 Note that by the above conditions on $f,\mu$ and $\mathcal{C}(t,s)$ the value $x_t$ may equal $-\infty$, but  is less than $+\infty$. Then for any $x>x_t$ there exists unique solution $\xi(t,x)$ to the equation
\begin{equation}
\frac{\partial}{\partial \xi} H(t,x,i\xi)=0.  \label{extr}
\end{equation}

To formulate the result we need some extra notation:
    \begin{equation}
    \mathcal{M}_k(t,\xi):=\frac{ \partial^k}{\partial \xi^k} \Psi(t,i\xi), \quad k\geq 1,
    \end{equation}
    \begin{equation}
     \mathcal{D}(t,x):=H(t,x,i\xi(t,x)), \quad \mathcal{K}(t,x):=\mathcal{M}_2(t,\xi(t,x)),
    \end{equation}
  and
$$
\Theta(t,z,B):=  \int_I \int_{\{u:\, f(t,s)u\in B,\,
                            u\in \mathcal{C}(t,s)\}}  \Big(1-\cos (f(t,s) zu)\Big)\mu(du)ds.
$$
  Consider a set  $\mathcal{A}\subset \{(t,x):  t\in \mathbb{T}, x>x_t\}\subset \mathbb{T}\times \real$, and
define
$$
\mathcal{T}:=\{t: \exists x\in (x_t, \infty), (t,x)\in \mathcal{A}\},
\quad \mathcal{B}:=\{(t,\xi): \exists (t,x)\in \mathcal{A},
(t,\xi)=(t, \xi(t,x))\}.
$$
Finally, suppose that the function $\theta:\mathcal{T}\to (0,+\infty)$ is  bounded away from zero on $\mathbb{T}$, and the function
$\chi:\mathcal{T}\to (0,+\infty)$ is  bounded away
from zero on every set  $\{t:\theta(t)\leq c\}$, $c>0$.

\begin{theorem}\label{td1}   Assume the following.

\medskip

\textbf{H1}  Conditions (\ref{f11}) - (\ref{exi2A}), (\ref{H1}), and (\ref{non-zero}) hold true.

\textbf{H2} $\Mf_{4}(t,\xi)\ll \Mf_{2}^2(t,\xi)$, $\theta(t)+\xi\to \infty$, $(t,\xi)\in\mathcal{B}$.

\textbf{H3} For $\theta(t)+\xi\to \infty$,  $(t,\xi)\in\mathcal{B}$.
\begin{align*}
\ln\left(\Big(\chi^{-2}(t)\frac{\Mf_{4}(t,\xi)}{\Mf_{2}(t,\xi)}\Big)
\vee 1\right)&+\ln\left(\Big(  \ln  \left((1\vee \chi^{-1}(t))\Mf_{2}(t,\xi)\right)
\Big)\vee1\right)\\& \ll \ln \theta(t)
+\chi(t)\xi.
\end{align*}

\textbf{H4} There exist $R>0$ and  $\delta>0$ such that
\begin{equation}\label{h5}
 \Theta(t,z,\Re_+)\geq (1+\delta)\ln (\chi(t) |z|),\quad t\in\mathcal{T}, \quad  |z|>R.
\end{equation}

\textbf{H5} There exists $r>0$ such that for every $\epsilon>0$,
$$
\inf_{|z|>\epsilon}\Theta(t,z,[r\chi(t),+\infty))\geq   c \theta(t)\Big((\epsilon\chi(t))^2\wedge 1\Big),\quad t\in\mathcal{T},\quad  c>0.
$$

Then the function $q_t(x)$ given by (\ref{q1}) is well-defined, and  satisfies
\begin{equation}
q_t(x)\sim \frac{1}{\sqrt{2\pi \mathcal{K}(t,x)}} e^{\mathcal{D}(t,x)}, \quad\theta(t)+x\to\infty, \quad (t,x)\in \mathcal{A}.\label{ttp}
    \end{equation}
\end{theorem}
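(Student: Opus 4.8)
The plan is to evaluate (\ref{q1}) by the saddle-point method, organised around the observation that, once the contour has been moved onto the critical point, the resulting integrand is the characteristic function of an exponentially tilted (Esscher) version of the law of $Y_t$, so that (\ref{ttp}) becomes a quantitative local central limit theorem for a family of such tilts, evaluated at the mean. Concretely, the analytic extension of $\Psi(t,\cdot)$ to $\mathbb{C}_+$ recorded after (\ref{H1}), together with the decay of $\phi(t,\cdot)$ along $\real$, lets me apply Cauchy's theorem to move the line of integration in (\ref{q1}) to the horizontal line $i\xi(t,x)+\real$ through the critical point $i\xi(t,x)$ solving (\ref{extr}). Writing $z=i\xi(t,x)+\zeta$ and using that $\frac{\partial}{\partial z}H(t,x,i\xi(t,x))=0$ while $\frac{\partial^2}{\partial z^2}H(t,x,i\xi(t,x))=-\mathcal{K}(t,x)$, this yields an exact representation of the form
$$
q_t(x)=\frac{e^{\mathcal{D}(t,x)}}{2\pi}\int_\real e^{\,H(t,x,i\xi(t,x)+\zeta)-\mathcal{D}(t,x)}\,d\zeta,
$$
where $\zeta\mapsto e^{\,H(t,x,i\xi(t,x)+\zeta)-\mathcal{D}(t,x)}$ is the characteristic function of the tilted law, whose $k$-th cumulant is $\mathcal{M}_k(t,\xi(t,x))$; by (\ref{extr}) its mean is exactly $x$ and its variance is $\mathcal{K}(t,x)$.

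For the central zone I would substitute $\zeta=\mathcal{K}(t,x)^{-1/2}w$ and Taylor expand $H$ around $\zeta=0$, so that on a window $|w|\le\rho(t,x)$ the integrand becomes $e^{-w^2/2}\big(1+o(1)\big)$, the cubic and quartic corrections being governed by $\mathcal{M}_3$ and $\mathcal{M}_4$. Condition \textbf{H2}, i.e.\ $\mathcal{M}_4\ll\mathcal{M}_2^2$, is the Lyapunov-type bound that makes the normalised tilt asymptotically Gaussian, and \textbf{H3} is exactly what permits $\rho(t,x)$ to be taken large enough that $\int_{|w|\le\rho}e^{-w^2/2}\,dw\to\sqrt{2\pi}$ while the Taylor remainder stays negligible on the window. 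Together these produce the claimed leading term $\frac{1}{\sqrt{2\pi\mathcal{K}(t,x)}}e^{\mathcal{D}(t,x)}$.

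It then remains to show that the contribution of the range where $|\zeta|$ is bounded away from $0$ is negligible. On the shifted contour one computes
$$
\mathrm{Re}\,H(t,x,i\xi(t,x)+\zeta)-\mathcal{D}(t,x)=-\int_I\int_{\mathcal{C}(t,s)}e^{\xi(t,x)f(t,s)u}\big(1-\cos(\zeta f(t,s)u)\big)\,\mu(du)\,ds,
$$
and since the tilting weight $e^{\xi(t,x)f(t,s)u}\ge1$ on the set $\{f(t,s)u\ge0\}$, this is bounded above by $-\Theta(t,\zeta,\real_+)$; this is precisely why \textbf{H4} and \textbf{H5} are stated for $\Theta$ with the plain measure $\mu$ on positive ranges, where the tilt can only help. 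Then \textbf{H5} supplies, on the intermediate range, decay of order $\theta(t)\big((\epsilon\chi(t))^2\wedge1\big)$, which beats the Gaussian normalisation and renders the intermediate part $o\big(\mathcal{K}(t,x)^{-1/2}\big)$ relative to the main term, while \textbf{H4}, namely (\ref{h5}), yields on $|z|>R$ a bound of the form $(\chi(t)|z|)^{-(1+\delta)}$ that secures absolute integrability at infinity and makes the far tail negligible. Adding the three pieces gives (\ref{ttp}).

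The main obstacle lies in the uniformity of the last two steps as $\theta(t)+x\to\infty$ inside $\mathcal{A}$, where the two driving mechanisms, $\theta(t)\to\infty$ and $\xi(t,x)\to\infty$, may act separately or simultaneously. The delicate point is to choose the cutoff $\rho(t,x)$ so that the Taylor remainder is controlled through \textbf{H2}--\textbf{H3} and, at the very same threshold, both the complementary Gaussian mass and the $\Theta$-tail estimates of \textbf{H4}--\textbf{H5} are negligible; the auxiliary normalising functions $\theta$ and $\chi$ are introduced exactly to make these thresholds compatible across the two regimes. Verifying that the $\Theta$-lower bounds translate into decay that dominates the main term $e^{\mathcal{D}(t,x)}/\sqrt{\mathcal{K}(t,x)}$ uniformly over $\mathcal{A}$ is the crux of the argument.
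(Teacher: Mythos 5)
Your proposal is correct and follows essentially the same route as the paper, which omits the proof of Theorem~\ref{td1} entirely and refers to the saddle-point argument of \cite[Theorem 2.1]{KK11}: contour shift to the critical point $i\xi(t,x)$, Gaussian approximation of the tilted law in the central zone controlled by \textbf{H2}--\textbf{H3}, and tail suppression via the $\Theta$-lower bounds of \textbf{H4}--\textbf{H5}. Your identification of the shifted integrand as the characteristic function of the Esscher tilt, and the inequality $\mathrm{Re}\,H(t,x,i\xi(t,x)+\zeta)-\mathcal{D}(t,x)\leq -\Theta(t,\zeta,\real_+)$ justifying why \textbf{H4}--\textbf{H5} are stated for the untilted measure on positive ranges, are exactly the mechanisms that make that proof work here.
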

Up to some straightforward and purely technical modifications, the proof of  Theorem~\ref{td1} coincides with the proof of  \cite[Theorem 2.1]{KK11}, and therefore is omitted. Here we only remark that the proof is based on an appropriate modification of the \emph{saddle point method}, see  \cite{Co65} for details.

\subsection{Outline of the proofs}

One can prove Theorem~\ref{t-main1} using a simplified version of  Theorem~\ref{td1} with $\mathcal{C}(t,s)\equiv \real$, and  the scaling property of the function $f(t,s)$:
\begin{equation}
    f(t,s) =|t|^{H-1/2} f\left(\frac{s}{t}\right),  \label{f}
    \end{equation}
   where
   $$
   f(s)=f(1,s)=  \frac{1}{\Gamma(H+1/2)} \left[(1-s)_+^{H-1/2}-(-s)^{H-1/2}_+\right], \quad s\in \Re;
   $$
 see \cite{KK11} for details.

Let us turn now to the proof of Theorem~\ref{t-main2}. To make the proof of Theorem~\ref{t-main2}  more transparent, we first sketch its main idea. In particular, we show how  Theorem~\ref{td1} applies in the situation when the function $f(t,s)$ is unbounded.

According to  \cite[Theorem 2.7]{RR89}, the characteristic function $\phi(t,z)$ (cf. (\ref{chint})) can be decomposed for any fixed $\lambda>0$ as
    $$
    \phi(t,z)=\phi_1(t,z)\phi_2(t,z)=e^{\psi_1(t,z)}e^{\psi_2(t,z)},
    $$
where
\begin{equation}
 \psi_1(t,z):=\int_{-\infty}^t\int_{\{u:\, uf(s/t)\leq \lambda\}}\left(e^{izf(t,s)u}-1-izf(t,s)u 1_{|u|\leq 1}\right)\,\mu(du)ds,\label{phi11}
    \end{equation}
    \begin{equation}
\psi_2(t,z):=\int_{-\infty}^t\int_{\{u:\, uf(s/t)> \lambda\}}\left(e^{izf(t,s)u}-1\right)\,\mu(du)ds+ iza(t),\label{phi2}
    \end{equation}
    \begin{equation}\begin{split}
     a(t):= \int_{-\infty}^t\int_{\{u:\, uf(s/t)> \lambda\}}f(t,s) u 1_{|u|\leq 1} \mu(du)ds=t^{H+1/2} a(1). \label{at}
    \end{split}
      \end{equation}
In the last identity we  have used the  scaling property (\ref{f}) of the kernel $f$.

The function  $|\phi_1(t,z)|$ is integrable with respect to  $z$ for any $t>0$, see Remark \ref{rb2} in the Appendix~II below. Then  there exists the distribution density
    \begin{equation}
    \tilde{p}_t(x):=\frac{1}{2\pi} \int_\Re  e^{-izx} \phi_1(t,z)dz, \label{tp}
    \end{equation}
and thus the required density $p_t(x)$ can be written as the convolution
    \begin{equation}
    p_t(x)=(\tilde{p}_t* P_t)(x),\label{p1}
    \end{equation}
where $P_t(dy)$ is the probability measure corresponding to  the  characteristic function $\phi_2(t,z)$. Define the measure $M_t(dy)$ by the relation
\begin{equation}
\int_{\Re}g(y) M_t(dy)=\int_{-\infty}^t\int_{\{u:\, uf(s/t)> \lambda\}}g(f(t,s)u)\,\mu(du)ds,\label{M}
\end{equation}
where $g$ is an arbitrary measurable and bounded function. Then, up to the shift by $a(t)$, the measure $P_t(dy)$ is equal to the distribution of
the compound Poisson random variable  with the intensity of the Poisson part equal to $M_t(dy)$. In other words,
\begin{equation}
    P_t(dy) = \delta_{a(t)}(dy)* \left( e^{-M_t(\Re)}\delta_{0}(dy) +   e^{-M_t(\Re)} \sum_{k=1}^\infty \frac{1 }{k!}   M_t^{*k} (dy)\right), \label{p2}
    \end{equation}
where  $M_t^{*k}(dy)$ is the $k$-fold convolution of $M_t(dy)$.

It follows from the scaling property (\ref{f}) that $M_t(\Re)=t\Lambda$ with
\begin{equation}
\Lambda= \int_{-\infty}^1 \int_{\{u: \, uf(s)> \lambda\}} \mu(du). \label{ML}
\end{equation}
Furthermore, we prove that the measure $M_t(dy)$ is absolutely continuous with respect to the Lebesgue measure with the density
\begin{equation}
\label{mt1}m_t(x)=t^{3/2-H} \mathfrak{m}(t^{1/2-H}x)1_{\{x>\lambda t^{H-1/2}\}},
\end{equation}
 where the function $\mathfrak{m}$ is defined by (\ref{m}).  Then (\ref{p1}) can be written in the form
   \begin{equation}
    p_t(x)=e^{-\Lambda t} \tilde{p}_t(x-a(t)) + \int_\Re \rho_t(x-y)\tilde{p}_t(y-a(t))dy,\label{p3}
    \end{equation}
where
\begin{equation}
\rho_t(x):=e^{-t\Lambda} \sum_{k=1}^\infty \frac{m_t^{*k}(x)}{k!}.\label{rho1}
\end{equation}

 Clearly, $\rho_t$ is the density of a random sum with the distribution of one term represented by $m_t$.  Suppose that the function $\mathfrak{m}$ is sub-exponential. Then it follows from  \cite[Theorem 3.2]{Kl89} that the density $\rho_t$ is sub-exponential as well, and
\begin{equation}\label{rho}
\rho_t(x)\sim \left(e^{-t\Lambda} \sum_{k=1}^\infty{k\over k!}(t\Lambda)^{k-1}\right)m_t(x)=m_t(x), \quad x\to +\infty,
\end{equation}
where we used that $\int_0^\infty m_t(x)\, dx=M_t(\Re)=t\Lambda$.

To estimate $\tilde{p}_t(x)$ we apply Theorem~\ref{td1}.   Namely, in Proposition~\ref{pr1} below we show that for a given $\varepsilon>0$ there exists $y(\varepsilon)>0$ such that
    \begin{equation}\begin{aligned}
   \exp\left(-\frac{(1+\varepsilon) x }{\lambda t^{H-1/2}}\ln \frac{x}{t^{H+1/2}}\right)\leq  \tilde{p}_t(x)\leq &\exp\left(-\frac{(1-\varepsilon) x }{\lambda t^{H-1/2}}\ln \frac{x}{t^{H+1/2}}\right),\\& \hskip 2.5cm xt^{-H-{1/2}}\geq y(\eps).  \end{aligned}\label{es1}
     \end{equation}
Since a sub-exponential function  decays slower than any exponential function (cf. \cite{Kl89}), the term $m_t$ dominates both $\tilde{p}_t(x)$ and the integral term in (\ref{p3}). In such a way,  (\ref{p3}), (\ref{rho}) and (\ref{es1}) provide the required relation (\ref{main-es2}).

\medskip

Let us summarize the idea explained above.   The distribution of $Z_t^H$ is decomposed in two parts. For one part, the distribution density is controlled by means of the respective version of the saddle point method,  while for the other part the distribution can be evaluated in the form of the series of convolution powers with the explicitly given law of the first summand. Similarly to Theorem 2.1 in \cite{KK11}, Theorem~\ref{td1}   provides a flexible  version of the saddle point method, which is applicable to a wide variety of integrals of the form (\ref{q1}). Thus one can expect that the approach  presented above  can be extended to other processes of the form  (\ref{yt}) with unbounded kernels $f(t,s)$. To keep the exposition reasonably tight, in this paper we do not investigate this possibility in the whole generality, and restrict our considerations to the important particular case of the  FLM with $0<H<1/2$.

\subsection{Properties of $\tilde{p}_t(x)$}\label{3.2}
\begin{proposition}\label{pr1} Under  (\ref{mu1}), for a given $\varepsilon>0$ there exists $y(\varepsilon)>0$ such that $\tilde{p}_t(x)$ satisfies (\ref{es1}).
 \end{proposition}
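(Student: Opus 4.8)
The plan is to recognize $\tilde{p}_t$ as an instance of the function $q_t$ from \eqref{q1} and to apply the general saddle point Theorem~\ref{td1}. Comparing \eqref{tp}, \eqref{phi11} with \eqref{q1}, \eqref{phi1}, we see that $\tilde{p}_t(x)=q_t(x)$ for the choice $I=(-\infty,t]$, the FLM kernel $f(t,s)$ from \eqref{f1}, and the truncated cone $\mathcal{C}(t,s)=\{u:\,uf(s/t)\le\lambda\}$, so that $\Psi(t,\cdot)=\psi_1(t,\cdot)$. The scaling property \eqref{f} gives $f(t,s)u=t^{H-1/2}f(s/t)u\le\lambda t^{H-1/2}$ on $\mathcal{C}(t,s)$, hence hypothesis \eqref{H1} holds with constant $\lambda t^{H-1/2}$; in particular $\Psi(t,\cdot)$ extends to $\mathbb{C}_+$ and the whole saddle point machinery becomes available. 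Condition \eqref{f11} follows since $\int_{-\infty}^t f^2(t,s)\,ds<\infty$ for $0<H<1/2$, while \eqref{exi1A}--\eqref{exi2A} follow from \eqref{esint}. The non-degeneracy \eqref{non-zero} is exactly the point where \eqref{mu1} enters: since $f(s/t)<0$ for $s<0$ and $\mu(\Re_-)>0$, the product $f(t,s)u$ is strictly positive on a set of positive $(\mu\times\mathrm{Leb})$-measure, which also guarantees the existence of the saddle point $\xi(t,x)$ for every sufficiently large $x$.

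I would then set the parameters of Theorem~\ref{td1} to be $\mathbb{T}=[t_0,\infty)$ (a single fixed $t$ already suffices for the application in Theorem~\ref{t-main2}), $\theta(t)=t$, $\chi(t)=t^{H-1/2}$, and $\mathcal{A}=\{(t,x):\,xt^{-H-1/2}\ge y\}$, so that $\theta(t)+x\to\infty$ inside $\mathcal{A}$ corresponds precisely to $xt^{-H-1/2}\to\infty$. The bulk of the work is the verification of \textbf{H2}--\textbf{H5}. The moment conditions \textbf{H2}, \textbf{H3} reduce to estimates on $\mathcal{M}_k(t,\xi)=\partial_\xi^k\Psi(t,i\xi)$, which for $k\ge2$ equals $\int_I\int_{\mathcal{C}(t,s)}(f(t,s)u)^k e^{\xi f(t,s)u}\,\mu(du)\,ds$; since the integrand is supported on $\{f(t,s)u\le\lambda t^{H-1/2}\}$, each $\mathcal{M}_k$ grows like $e^{\xi\lambda t^{H-1/2}}$ times a power of $\xi$ coming from the local behaviour of the image measure of $f(t,s)u$ near its upper edge $\lambda t^{H-1/2}$, and both $\mathcal{M}_4\ll\mathcal{M}_2^2$ and the logarithmic comparison in \textbf{H3} follow once this edge asymptotics is made explicit. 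The estimates \textbf{H4}, \textbf{H5} on the oscillatory functional $\Theta(t,z,\cdot)$ are the genuinely delicate point: one must bound $\int_I\int_{\mathcal{C}(t,s)}\bigl(1-\cos(f(t,s)zu)\bigr)\,\mu(du)\,ds$ from below, uniformly over $|z|>R$, in order to control the contribution of large $|z|$ in \eqref{q1}. Here I would exploit that $f(s/t)$ sweeps a full half-line of values while $\mu(\Re_-)>0$, so that for every $z$ a non-negligible portion of the $(u,s)$-mass produces oscillation of order one.

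With \textbf{H1}--\textbf{H5} in place, Theorem~\ref{td1} yields $\tilde{p}_t(x)\sim\bigl(2\pi\mathcal{K}(t,x)\bigr)^{-1/2}e^{\mathcal{D}(t,x)}$ as $xt^{-H-1/2}\to\infty$, and it remains to analyse $\mathcal{D}(t,x)=-\xi(t,x)x+\Psi(t,i\xi(t,x))$ and $\mathcal{K}(t,x)=\mathcal{M}_2(t,\xi(t,x))$. Using the scaling \eqref{f} to reduce to $t=1$, the saddle point equation $\mathcal{M}_1(t,\xi)=x$ forces $\xi(t,x)\sim(\lambda t^{H-1/2})^{-1}\ln(xt^{-H-1/2})$; substituting this into $\mathcal{D}$, the term $-\xi(t,x)x$ dominates and produces the leading behaviour
\begin{equation*}
\mathcal{D}(t,x)\sim-\frac{x}{\lambda t^{H-1/2}}\,\ln\frac{x}{t^{H+1/2}},\qquad xt^{-H-1/2}\to\infty,
\end{equation*}
while $\mathcal{K}(t,x)=\mathcal{M}_2(t,\xi(t,x))$ grows at most polynomially in $xt^{-H-1/2}$, so that $\ln\mathcal{K}(t,x)=o(\mathcal{D}(t,x))$. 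Consequently, for any $\varepsilon>0$ both the prefactor $(2\pi\mathcal{K})^{-1/2}$ and the relative error in $\mathcal{D}$ are absorbed once $xt^{-H-1/2}$ exceeds some threshold $y(\varepsilon)$, which is exactly the two-sided bound \eqref{es1}.

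The main obstacle I anticipate is twofold: establishing the precise edge asymptotics of the image measure of $f(t,s)u$ at its upper boundary $\lambda t^{H-1/2}$, on which both the moment comparisons \textbf{H2}--\textbf{H3} and the evaluation of $\mathcal{D}$ rest, and verifying the lower bounds \textbf{H4}--\textbf{H5} uniformly in $z$, which is the step controlling the tails of the Fourier integral in the saddle point method. Everything else --- the integrability conditions \eqref{f11}--\eqref{exi2A}, the reduction by the scaling \eqref{f}, and the passage from the sharp asymptotics \eqref{ttp} to the exponential bounds \eqref{es1} --- I expect to be routine.
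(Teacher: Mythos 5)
Your proposal follows essentially the same route as the paper: the same identification $\tilde p_t=q_t$ with $I=(-\infty,t]$, $\mathcal{C}(t,s)=\{u:\,uf(s/t)\le\lambda\}$, the same parameter choices $\theta(t)=t$, $\chi(t)=t^{H-1/2}$, $\mathcal{A}=\{(t,x):xt^{-H-1/2}\ge c\}$, and the same passage from \eqref{ttp} to \eqref{es1} via $\xi(t,x)\sim(\lambda t^{H-1/2})^{-1}\ln(xt^{-H-1/2})$ and $\mathcal{D}(t,x)\sim-\lambda^{-1}t^{1/2-H}x\ln(xt^{-H-1/2})$. The two steps you flag as obstacles are resolved in the paper exactly along the lines you indicate: \textbf{H2}--\textbf{H3} via the scaling identity $\Mf_k(t,\zeta)=\chi^k(t)\,t\,\Mf_k(\chi(t)\zeta)$ together with the Laplace-method asymptotics $\Mf_k(\zeta)\sim\lambda^k\Mf_0(\zeta)$ for the image measure $N$ (which is where the choice of $\lambda$ with $N((\lambda-\eps,\lambda))>0$ matters, rather than any "power of $\xi$" edge expansion), and \textbf{H4}--\textbf{H5} via the elementary oscillation bounds \eqref{exh1}--\eqref{exh2} of Proposition~\ref{p31}, applied on the negative half-line where $\mu(\Re_-)>0$ supplies the mass.
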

\begin{remark}
Note that in the above Proposition we do not assume that $t>0$ is fixed.
\end{remark}
\begin{proof}  We use Theorem~\ref{td1} with $\theta(t)=t$, $\chi(t)=t^{H-\frac{1}{2}}$, $\mathbb{T}=[t_0,\infty)$,  $I=(-\infty, t]$, $\mathcal{C}(t,s)=\{u:f(t,s)u\leq \lambda\}$,   and  $\mathcal{A}=\{ (t,x)\subset [t_0,\infty)\times \Re_+:\,\, x t^{-H-1/2} \geq c\}$. Here $t_0$, $c$ and $\lambda$ are some positive constants.  Then condition \textbf{H1} is satisfied: (\ref{H1}) holds true by the construction, (\ref{non-zero}) holds true thanks to (\ref{mu1}), and  (\ref{f11}) - (\ref{exi2A}) can be proved using the same estimates as in the proof of Proposition \ref{p21} in Appendix~I (we omit the details).

 Recall that $f$ possesses   the self-similarity property (\ref{f}).
Then
    \begin{equation}
    \Mf_k(t,\zeta)=\chi^k(t) t \Mf_k(\chi(t)\zeta),\label{mk-scal}
    \end{equation}
where
\begin{equation*}
\begin{split}
\Mf_k (\zeta)&:= \frac{\partial^k}{\partial \zeta^k} H(1,x,i\zeta(1,x))=
\begin{cases}
\int_{-\infty}^\lambda u (e^{u\zeta}-1)N(du),& k=1,\\
\int_{-\infty}^\lambda u^k e^{u \zeta}N(du),& k\geq 2.
\end{cases}
\end{split}
\end{equation*}
Here  $N(du):= \int_{-\infty}^1 \tilde{\mu}_s(du)ds$, and $\tilde{\mu}_s(du)$ is the image measure of $\mu(du)$ under the mapping $u\mapsto f(s)u$.
The choice of $\lambda$ above can be made in such a way that every segment $(\lambda-\eps, \lambda)$ has a positive measure $N$. Then it can be shown (e.g.,    \cite[Example~3.1]{KK11}) that for any $\varepsilon>0$
 \begin{equation}\label{star}
e^{\zeta(\lambda-\varepsilon)} \ll \Mf_k(\zeta), \quad \Mf_k(\zeta)-\lambda^k N(\{\lambda\})e^{ \zeta \lambda}\ll e^{\zeta \lambda}, \quad \zeta\to+\infty.
\end{equation}
Moreover,  applying the Laplace method we get
\begin{equation}
\Mf_k(\zeta)\sim \lambda^k \Mf_0(\zeta),\quad \zeta \to \infty, \label{mk-m0}
\end{equation}
where
$$
\Mf_0(\zeta)=\int_{-\infty}^\lambda\Big( e^{\zeta u }-1-\zeta u \Big) N(du).
$$
Note that the solution $\xi(t,x)$ to (\ref{extr}) satisfies
\begin{equation}
\xi(t,x)=\chi^{-1}(t) \zeta(xt^{-H-1/2}), \quad \text{where}\quad \zeta(x):=\xi(1,x).\label{xz}
\end{equation}
 Since $\zeta(x)$ is the solution to $\Mf_1(\zeta(x))=x$, we have $\zeta(x)\to\infty$ as $x\to\infty$.
 Then by \eqref{xz} and the definition of $\mathcal{A}$     we have $\chi(t)\xi\to\infty$ as
$t+\xi\to\infty$, $(t,\xi) \in \mathcal{B}$, implying
$$
\Mf_0(\chi(t)\xi)>0, \quad \text{as $t+\xi\to\infty$, $(t,\xi) \in \mathcal{B}$}.
$$
   Therefore, by  (\ref{mk-scal}) and (\ref{mk-m0}) we have \textbf{H2}:
$$
\frac{\Mf_4(t,\xi)}{\Mf_2^2(t,\xi)}\sim \frac{1}{t \Mf_0(\chi(t)\xi)}\ll 1, \quad t+\xi\to\infty, \quad (t,\xi) \in \mathcal{B}.
$$
Analogously, we have
$$
\chi^{-2}(t) \frac{\Mf_4(t,\xi)}{\Mf_2(t,\xi)}\sim \lambda^2,
$$
$$
\ln\left(\Big(  \ln  \left((1\vee \chi^{-1}(t))\Mf_{2}(t,\xi)\right)
\Big)\vee1\right) \leq C(\ln\ln t+\ln(\chi(t)\xi))\ll \ln t + \chi(t)\xi,
$$
as $t+\xi\to\infty$, $(t,\xi) \in \mathcal{B}$, which provides \textbf{H3}.

 To show \textbf{H4}, take $b<0$  and $\varepsilon>0$ such that
$0<\mu([\lambda/ f(b), -\varepsilon])<\infty$. Then by (\ref{exh2}) (see Appendix~II) we have for $|z|\geq R$ with some $R$ large enough
    \begin{align*}
    \Theta(t,z,\Re_+) &\geq  t \int_{-\infty}^0 \int_{\{u: \, 0<f(s)u\leq \lambda\}} (1-\cos (\chi(t) zuf(s)))\mu(du)ds
    \\&
    \geq t_0\int_{-\infty}^b \int_{\lambda/f(b)}^{-\varepsilon}  (1-\cos (\chi(t) zuf(s)))\mu(du)ds
    \\&
    \geq c t_0\mu([\lambda/f(b),-\varepsilon]) \ln (\chi(t) \varepsilon|z|).
    \end{align*}
Since  we can chose in (\ref{exh2}) $c>0$ arbitrary large, condition \textbf{H4} holds true. Finally, estimate (\ref{exh1}) (see Appendix~II) provides \textbf{H5}:  since $\mu(\Re_-)>0$,  there exists $(a,b)\subset (-\infty,0)$, $q>0$, such that $0<\mu([q\lambda/f(b), \lambda/f(b)])<\infty$ (note that $f(b)<0$), and
    \begin{align*}
    \inf_{|z|\geq c} \Theta(t,z,[\chi(t) q, \infty))&\geq t  \inf_{|z|\geq c}\int_a^b \int_{q\lambda/f(s)}^{\lambda/f(s)}
    (1-\cos (\chi(t) zuf(s)))\mu(du)ds
    \\&
    \geq  t  \inf_{|z|\geq c} \int_{q\lambda/f(b)}^{\lambda/f(b)} ((z\chi(t) u)^2 \wedge 1) \mu(du)
    \\&
    \geq c_1  t ((\chi(t)c)^2\wedge 1).
    \end{align*}
Thus, all conditions of Theorem~\ref{td1} are satisfied, and therefore (\ref{ttp}) holds true.

By (\ref{star}),
$$
 \ln\Mf_1(\zeta)\sim  \lambda \zeta, \quad \zeta\to+\infty.
$$
Since   $\zeta(x)$ (cf. (\ref{xz}))  is the solution to $\Mf_1(\zeta(x))=x$, this means that
$$
\zeta(x)\sim {\ln x\over \lambda}, \quad x\to +\infty.
$$
Denote  $\mathcal{D}(x) := H(1,x,i\xi(1,x))$, then using \eqref{mk-m0} with $k=1,2$ we get from the previous relation that
    \begin{equation}
      \mathcal{D}(x)\sim -{x\ln x\over \lambda}, \quad \Mf_2(\zeta(x))\sim \lambda x,\quad x\to +\infty.\label{toin}
    \end{equation}
From (\ref{ttp}) and (\ref{toin}) we deduce that for a given $\varepsilon>0$ there exists $y(\varepsilon)>0$ such that  (\ref{es1}) holds true.
\end{proof}

\subsection{Properties of $M_t(dx)$ and the completion of the proof}

\begin{lemma}\label{mt}
For every  $t> 0$ we have $M_t(dx)=m_t(x)dx$ with
$m_t$ defined by (\ref{mt1}).
\end{lemma}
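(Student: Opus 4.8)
The plan is to identify $M_t$ by testing it against an arbitrary nonnegative bounded measurable function $g$ through its defining relation (\ref{M}), and to transform the right-hand side into $\int_\Re g(y)m_t(y)\,dy$ by two successive changes of variables, all interchanges of integration being justified by Tonelli's theorem since $g\ge 0$. First I would remove the $t$-dependence of the kernel via the self-similarity (\ref{f}): the substitution $s=tw$ (so $ds=t\,dw$, $f(s/t)=f(w)$ and $f(t,s)=t^{H-1/2}f(w)$) recasts (\ref{M}) as
\[
\int_\Re g(y)\,M_t(dy)=t\int_{-\infty}^{1}\int_{\{u:\,uf(w)>\lambda\}}g\big(t^{H-1/2}f(w)u\big)\,\mu(du)\,dw .
\]
Integrating in $w$ first for each fixed $u$, the core step is the inner substitution $w\mapsto y:=t^{H-1/2}f(w)u$.

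Here I would use the monotonicity structure of $f=f(1,\cdot)$ recalled before (\ref{ell}): $f$ is a strictly increasing $C^1$ bijection from $(0,1)$ onto $[\tfrac{1}{\Gamma(H+1/2)},\infty)$, and a strictly decreasing $C^1$ bijection from $(-\infty,0)$ onto $(-\infty,0)$, so the inner integral splits into two branches. On the increasing branch the constraint $uf(w)>\lambda>0$ forces $u>0$, on the decreasing branch it forces $u<0$; in either case $y=t^{H-1/2}uf(w)>\lambda t^{H-1/2}$. Writing $f(w)=t^{1/2-H}y/u$, i.e. $w=f^{-1}(t^{1/2-H}y/u)$, and using the definition (\ref{ell}) of $\ell$ gives
\[
\frac{dw}{dy}=\frac{t^{1/2-H}}{u}\,(f^{-1})'\!\Big(t^{1/2-H}\tfrac{y}{u}\Big)=\frac{t^{1/2-H}}{u}\,\ell\!\Big(t^{1/2-H}\tfrac{y}{u}\Big).
\]
The decisive sign check is that this Jacobian is positive on both branches: on the increasing branch $u>0$ and $\ell>0$, while on the decreasing branch $u<0$ and $\ell<0$, so $\tfrac1u\ell(\cdot)>0$ in each case (equivalently $\tfrac1u\ell=\tfrac1{|u|}|\ell|$, as a genuine Jacobian must be). Moreover $\ell$ vanishes on the gap $[0,\tfrac1{\Gamma(H+1/2)})$ lying outside the range of $f$, and one checks that $uf(w)>\lambda$ holds exactly when $y>\lambda t^{H-1/2}$ and $t^{1/2-H}y/u$ lies in the range of $f$; hence extending the $y$-integration to all of $(\lambda t^{H-1/2},\infty)$ introduces no spurious contribution and automatically encodes both the constraint and the range restriction of each branch.

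Summing the two branches, the inner integral equals $\int_{\lambda t^{H-1/2}}^{\infty}g(y)\,t^{1/2-H}\tfrac1u\,\ell(t^{1/2-H}y/u)\,dy$, so that
\[
\int_\Re g(y)\,M_t(dy)=t^{3/2-H}\int_{\lambda t^{H-1/2}}^{\infty}g(y)\Big[\int_\Re\frac1u\,\ell\Big(t^{1/2-H}\tfrac{y}{u}\Big)\mu(du)\Big]dy .
\]
By the definition (\ref{m}) the bracketed integral is $\mathfrak m(t^{1/2-H}y)$, whence the right-hand side equals $\int_\Re g(y)\,t^{3/2-H}\mathfrak m(t^{1/2-H}y)\,1_{\{y>\lambda t^{H-1/2}\}}\,dy=\int_\Re g(y)\,m_t(y)\,dy$ with $m_t$ as in (\ref{mt1}). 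As $g\ge0$ is arbitrary, this gives $M_t(dy)=m_t(y)\,dy$.

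I expect the only real obstacle to be the bookkeeping in this two-branch substitution: verifying that the orientations, together with the signs of $u$ and of $\ell$, conspire to produce the single positive Jacobian $\tfrac1u\ell$, and that the intersection of the constraint $\{uf(w)>\lambda\}$ with the two ranges of $f$ matches precisely the support $\{y>\lambda t^{H-1/2}\}$ combined with the vanishing of $\ell$ off the range. The two interchanges of integration are then routine consequences of Tonelli's theorem, and the power counting $t\cdot t^{1/2-H}=t^{3/2-H}$ with argument $t^{1/2-H}y$ reproduces (\ref{mt1}) exactly.
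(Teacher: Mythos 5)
Your proof is correct and follows essentially the same route as the paper's: test $M_t$ against an arbitrary bounded (nonnegative) measurable $g$, use the scaling (\ref{f}) to reduce to the kernel $f(1,\cdot)$, split into the two monotone branches of $f$, change variables via $f^{-1}$ so that the Jacobian produces $\frac{1}{u}\ell(\cdot)$, exploit the vanishing of $\ell$ on $\big(0,\tfrac{1}{\Gamma(H+1/2)}\big)$ to extend the $y$-integration to $(\lambda t^{H-1/2},\infty)$, and conclude by Fubini--Tonelli and the definition (\ref{m}) of $\mathfrak m$. The only cosmetic difference is that the paper performs the substitution in two steps ($\tau=1/f(s)$ followed by $r=y/\tau$) whereas you do it in one; the sign bookkeeping and the treatment of the constraint set are the same.
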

\begin{proof} Let $g$ be an arbitrary bounded measurable function.  Using the scaling property (\ref{f}) of the kernel $f(t,s)$ we can rewrite \eqref{M} as
 \begin{align*}
    \int_{-\infty}^\infty& g(y)M_t(dy)= \int_{-\infty}^t \int_{\{ yf(s/t)> \lambda\}} g( t^{H-1/2} f(s/t)y) \mu(dy)ds
    \\&
    =t\int_{-\infty}^0 \int_{\{y<-\lambda/f(s)\}} g_t (f(s)y) \mu(dy)ds+ t\int_0^1\int_{\{y>\lambda/f(s)\}} g_t (f(s)y) \mu(dy)ds
     \\&
     =:I_{-}+I_{+}.
    \end{align*}
Here  $g_t(y):= g( t^{H-1/2}y)$, and recall that the function  $f(s)=f(1,s)$ is monotone on $(-\infty, 0)$ and $(0,1)$, in particular,   $f$ is positive on $(0,1)$ and  negative on $(-\infty, 0)$. Let us transform the integrals $I_{+}$ and $I_{-}$ separately.

Recall that the range of the restriction of $f$ to $(0,1)$ equals $\Big[{1\over \Gamma(H+1/2)},+\infty \Big)$. Then, making the change of variables $\tau= 1/f(s)$, we get
   \begin{align*}
 I_{+}&=  t\int_{0}^{\Gamma(H+1/2)} \left(\int_{\{y>\lambda\tau\}}   g_t\left(\frac{y}{\tau}\right) \mu(dy)\right)
    \frac{d\tau}{\tau^2 f'(f^{-1}(\frac{1}{\tau}))}\\&
    =t\int_{0}^{+\infty} \left(\int_{\{y>\lambda\tau\}}   g_t\left(\frac{y}{\tau}\right) \mu(dy)\right)
   \ell\left(\frac{1}{\tau}\right) \frac{d\tau}{\tau^2}.
 \end{align*}
In the second identity we take into account that, according to  (\ref{ell}), the function  $\ell$ vanishes on $\Big(0, {1\over \Gamma(H+1/2)} \Big)$. Then the  further change of variables $r= y/\tau$ and the Fubini theorem give
$$
I_{+}= t\int_\lambda^\infty  g_t(r) \left(\int_0^\infty\frac{1}{y} \ell\left(\frac{r}{y}\right)\mu(dy)  \right)dr.
$$
Performing similar calculations, we get
$$
I_{-}= t\int_\lambda^\infty  g_t(r) \left(\int_{-\infty}^0\frac{1}{y} \ell\left(\frac{r}{y}\right)\mu(dy)  \right)dr.
$$
Adding the expressions for $I_+$ and $I_-$  we get
$$
 \int_{-\infty}^\infty g(y)M_t(dy)= t\int_\lambda^\infty  g( t^{H-1/2} r) \mathfrak{m}(r) dr=\int_{\lambda t^{H-1/2}} g(y) \Big[t^{3/2-H} \mathfrak{m}(t^{1/2-H}y)\Big]\, dy.
$$
 \end{proof}

  Let us summarize: we have (\ref{p3}) and (\ref{es1}); in addition, if  $\mathfrak{m}$ is sub-exponential, we have (\ref{rho}) by Theorem 3.2 in \cite{Kl89}. In such a way we obtain the proof of part i) of Theorem~\ref{t-main2}. The following lemma completes  the proof of the statement ii).

\begin{lemma}\label{mx2}
If  $\mu$ satisfies \eqref{m2r}, then
\begin{equation}\label{m1}
\mathfrak{m}(r)\sim c_H \left( \int_\real |u|^{2/(1-2H)} \mu(du)\right) r^{-(3-2H)/(1-2H)}, \quad r\to\infty,
\end{equation}
where the constant $c_H$ is defined in \eqref{CH}. In particular, $\mathfrak{m}\in \mathcal{L}d$.
\end{lemma}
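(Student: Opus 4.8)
The plan is to reduce the statement to the pointwise asymptotics of the function $\ell$ at $\pm\infty$ and then to justify passing to the limit under the integral sign in \eqref{m}. Set $\beta:=(3-2H)/(1-2H)$, so that $\beta-1=2/(1-2H)$ is exactly the exponent occurring in \eqref{m2r}; note also that $\beta>3>1$. First I would analyse $\ell$ near the two ``ends'' of its domain. For $y\to+\infty$ the relation $y=f(f^{-1}(y))$ forces $f^{-1}(y)\to 1^-$, where $f(s)=\frac{1}{\Gamma(H+1/2)}(1-s)^{H-1/2}$; inverting explicitly and using $\ell=1/f'(f^{-1}(\cdot))$ gives $\ell(y)\sim c_H\,y^{-\beta}$. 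For $y\to-\infty$ one has $f^{-1}(y)\to 0^-$, where the term $-\frac{1}{\Gamma(H+1/2)}(-s)^{H-1/2}$ dominates, and the same computation yields $\ell(y)\sim -c_H\,|y|^{-\beta}$. In both cases the constant produced by inverting $f$ and taking the reciprocal of $f'$ collapses, after simplifying the powers of $\Gamma(H+1/2)$, precisely to $\pm c_H$ with $c_H$ as in \eqref{CH}; carrying out this elementary but slightly tedious simplification is the first thing to verify.

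With these asymptotics in hand, for each fixed $y\ne 0$ the integrand of $r^{\beta}\mathfrak{m}(r)$, namely $\frac{r^{\beta}}{y}\ell(r/y)$, converges as $r\to+\infty$ to $c_H|y|^{\beta-1}$: for $y>0$ the argument $r/y\to+\infty$, while for $y<0$ the two sign reversals coming from $\ell(r/y)\sim-c_H|r/y|^{-\beta}$ and from $1/y$ cancel, producing the same positive limit. Since $\ell$ vanishes on $[0,1/\Gamma(H+1/2))$, the region $\{y>r\Gamma(H+1/2)\}$ contributes nothing, and one is left to prove
\[
r^{\beta}\mathfrak{m}(r)\longrightarrow c_H\int_{\Re}|y|^{\beta-1}\,\mu(dy),\qquad r\to+\infty,
\]
which is \eqref{m1} since $\beta-1=2/(1-2H)$. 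The limiting integral is finite: near infinity this is \eqref{m2r}, while near zero $\beta-1>2$ combined with the defining property $\int_{|y|\le1}y^2\,\mu(dy)<\infty$ of a L\'evy measure makes $|y|^{\beta-1}$ integrable.

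The core of the argument is the dominated-convergence justification. For $y>0$ one has $\ell(r/y)\ge0$ together with the global bound $\ell(w)\le Cw^{-\beta}$ on the branch $[1/\Gamma(H+1/2),\infty)$ (the function $\ell(w)w^{\beta}$ being continuous with a finite limit $c_H$ at $+\infty$), which gives the $r$-independent $\mu$-integrable majorant $C\,|y|^{\beta-1}1_{\{y>0\}}$, so DCT applies to the positive part directly. For $y<0$ I would split at $y=-r$. On $\{-r\le y<0\}$ the argument $r/y\le-1$ stays bounded away from the singularity of $\ell$ at $0^-$, and $|\ell(w)|\le C|w|^{-\beta}$ there yields again the majorant $C|y|^{\beta-1}$. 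The delicate region is $\{y<-r\}$, where $r/y\in(-1,0)$ approaches the singularity; reading off from the local behaviour of $f$ as $s\to-\infty$ the one-sided bound $|\ell(w)|\le C|w|^{-\gamma}$ near $0^-$ with some $\gamma<\beta$ (in fact $\gamma<2$), I obtain
\[
\int_{\{y<-r\}}\bigl|\tfrac{r^{\beta}}{y}\ell(r/y)\bigr|\,\mu(dy)\ \le\ C\,r^{\beta-\gamma}\int_{\{|y|>r\}}|y|^{\gamma-1}\,\mu(dy).
\]
Bounding $|y|^{\gamma-1}\le r^{\gamma-\beta}|y|^{\beta-1}$ on $\{|y|>r\}$ (valid since $\gamma\le\beta$) turns the right-hand side into $C\int_{\{|y|>r\}}|y|^{\beta-1}\,\mu(dy)$, which tends to $0$ as the tail of the convergent integral in \eqref{m2r}. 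This vanishing of the ``near-singularity'' contribution is the main obstacle, and it is precisely the point where hypothesis \eqref{m2r} is used in an essential way.

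Finally, \eqref{m1} shows that $\mathfrak{m}$ is regularly varying at $+\infty$ with index $-\beta<-1$, hence eventually positive and integrable near $+\infty$; densities of this type are sub-exponential, so $\mathfrak{m}\in\Ld$, which may be quoted directly from \cite{Kl89}. This completes the plan.
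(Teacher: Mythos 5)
Your proposal is correct and follows essentially the same route as the paper: the exact power form of $\ell$ on the positive branch, the asymptotics $\ell(y)\sim -c_H(-y)^{-(3-2H)/(1-2H)}$ at $-\infty$ and $\ell(y)\asymp -(-y)^{-(5-2H)/(3-2H)}$ at $0^-$, and the use of \eqref{m2r} to kill the contribution of the region where $r/y$ approaches the singularity of $\ell$ at $0^-$ (your split at $y=-r$ plays the role of the paper's three-piece $\varepsilon$-decomposition of $\mathfrak{m}_-$). The only cosmetic difference is that you package the limit as dominated convergence plus a vanishing tail estimate, while the paper runs a $\limsup/\liminf$ argument with perturbed constants; both hinge on exactly the same ingredients.
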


\begin{proof}  Write $\mathfrak{m}=\mathfrak{m}_-+\mathfrak{m}_+$, where
\begin{equation}
\mathfrak{m}_-(r):=\int_{-\infty}^0 \frac{1}{y} \ell \Big(\frac{r}{y}\Big)\mu(dy),
\quad  \mathfrak{m}_+(r):=\int^{+\infty}_0 \frac{1}{y} \ell \Big(\frac{r}{y}\Big)\mu(dy).\label{m12-e10}
\end{equation}
On the positive half-axis the function $\ell$ can be calculated explicitly:
\begin{equation}
\ell (y)=   c_H y^{-(3-2H)/(1-2H)} 1_{\{y\geq 1/ \Gamma(H+1/2)\}}, \label{c1}
 \end{equation}
where  $c_H$  is given by (\ref{CH}). Then
\begin{align*}
\mathfrak{m}_+(r)&=  c_H r^{-(3-2H)/(1-2H)}\int_0^{r \Gamma(H+1/2)} y^{2/(1-2H)}\mu(dy)
\\&
\sim c_H \left(\int_0^{+\infty} y^{2/(1-2H)}\mu(dy)\right) r^{-(3-2H)/(1-2H)}, \quad r\to +\infty.
\end{align*}
Note that $2/(1-2H)>2$ and $\mu$ is a L\'evy measure (that is,  $\int_{|y|\leq 1}y^2\mu(dy)<\infty$), which together with \eqref{m2r}  implies that $\int_0^{+\infty} y^{2/(1-2H)}\mu(dy)<\infty$.

 On the negative half-axis one has
\begin{equation}
\ell(y)\sim
\begin{cases}
-c_H (-y)^{-(3-2H)/(1-2H)},& y\to-\infty,\\
-\hat c_H (-y)^{-(5-2H)/(3-2H)}, & y\to 0-,
\end{cases}\label{ell-as}
\end{equation}
with   $\hat c_H= \frac{2}{3-2H} \left(\frac{1-2H}{2\Gamma(H+1/2)}\right)^{2/(3-2H)}$. Take arbitrary $\eps>0$ and choose $a_\eps$, $b_\eps>0$ such that
$$
-\ell(y)\leq (\hat c_H+\eps) (-y)^{-(5-2H)/(3-2H)}, \quad (-y)\in (0,a_\eps), $$
$$  -\ell(y)\leq (c_H+\eps) (-y)^{-(3-2H)/(1-2H)}, \quad (-y)>b_\eps.
$$
Then
\begin{align*}
\mathfrak{m}_-(r)&=\left[\int_{-\infty}^{-r/a_\eps}+ \int_{-r/a_\eps}^{-r/b_\eps} + \int_{-r/b_\eps}^0\right]\frac{1}{y} \ell \Big(\frac{r}{y}\Big)\mu(dy)\\
& \leq (\hat c_H+\eps) r^{-(5-2H)/(3-2H)} \int_{-\infty}^{-r/a_\eps} (-y)^{2/(3-2H)}\mu(dy)\\&+\sup_{y\in [-b_\eps, -a_\eps]}\Big(-\ell (y)\Big)\int_{-r/a_\eps}^{-r/b_\eps}\left(-{1\over y}\right)\mu(dy)\\
&+ (c_H+\eps) r^{-(3-2H)/(1-2H)} \int_{-r/b_\eps}^0\!\! (-y)^{2/(1-2H)}\mu(dy)\\
& =I_1(r)+I_2(r)+I_3(r).
\end{align*}
By  condition \eqref{m2r}, one has
\begin{align*}
\int_{-\infty}^{-r/a_\eps}  (-y)^{2/(3-2H)}\mu(dy)& =\int_{-\infty}^{-r/a_\eps}  (-y)^{2/(1-2H)}(-y)^{-4/((1-2H)(3-2H))}\mu(dy)\\
&\leq \left(\tfrac{a_\epsilon}{r}\right)^{4/((1-2H)(3-2H))} \int_{-\infty}^{-r/a_\eps}  (-y)^{2/(1-2H)}\mu(dy)\\
&\leq c_1 r^{-4/((1-2H)(3-2H))},
\end{align*}
which implies
$$
r^{(3-2H)/(1-2H)}  I_1(r)\to 0, \quad r\to \infty.
$$
Further, since  by \eqref{m2r} we have
$$
 r^{2/(1-2H)}\mu((-\infty, -r])\leq \int_{-\infty}^{-r} (-y)^{2/(1-2H)}\mu(dy)\to 0, \quad r\to\infty,
 $$
then
\begin{align*}
r^{(3-2H)/(1-2H)} I_2(r) &\leq c_2  r^{2/(1-2H)} \mu\Big(\big(-\infty, -r/ b_\eps\big]\Big)\to 0, \quad r\to +\infty.
\end{align*}
 Thus,
$$
\limsup_{r\to +\infty} r^{(3-2H)/(1-2H)}\mathfrak{m}_-(r)\leq (c_H+\eps) \int_{-\infty}^0\!\! (-y)^{2/(1-2H)}\mu(dy).
$$
The same argument provides the desired lower bound for $\liminf_{r\to +\infty}$ with $c_H-\eps$ instead of $c_H+\eps$. Since $\eps$ is arbitrary, these two estimates lead to the relation
$$
\mathfrak{m}_-(r) \sim c_H \left(\int_{-\infty}^0 (-y)^{2/(1-2H)}\mu(dy)\right) r^{-(3-2H)/(1-2H)}, \quad r\to +\infty,
$$
which completes the proof.
\end{proof}

\section{Two examples: the ``extremely heavy-tailed'' case}\label{s4}
In this section we give two examples which illustrate the behaviour of $p_t(x)$  when  condition \eqref{m2r} fails. In this case  we say that the measure $\mu$  is ``extremely heavy-tailed''.

\begin{example}\label{exa1} Denote by $\mu_-(x)=\mu((-\infty, -x])$, $\mu_+(x)=\mu([x, +\infty))$, $x>0$,  the ``tails'' of the L\'evy measure $\mu$, and assume that $\mu_-$ and  $\mu_+$ are regularly varying at $+\infty$, that is, there exist $\alpha_\pm\in \Re$ and slowly varying functions  $L_\pm$, such that
$$
\mu_\pm(x)=x^{-\alpha_\pm} L_\pm(x),
$$
see, for example, \cite[Chapter VIII, \S 8]{Fe71}. We investigate the behaviour of the functions $\mathfrak{m}_-$ and $\mathfrak{m}_+$ introduced in the proof of Lemma \ref{mx2}.

We assume \begin{equation}\label{apm}
 \alpha_\pm\in \Big({2\over 3-2H}, {2\over 1-2H}\Big).
 \end{equation}
Note that condition $\alpha_{\pm}\geq 2/(3-2H)$ is necessary for  \eqref{esint} to hold true, and if $\alpha_\pm> 2/(1-2H)$ then \eqref{m2r} holds true and the required behaviour of  $\mathfrak{m}_-$ and $\mathfrak{m}_+$ is already described in Lemma \ref{mx2}. In order to  simplify the exposition,  we exclude from the consideration the critical values $\alpha_\pm=2/(3-2H)$ and $\alpha_\pm=2/(1-2H)$.

 The asymptotic behaviour of $\mathfrak{m}_+$ can be obtained almoststraightforwardly using the standard result on the behaviour of the integrals w.r.t. the measures with regularly varying tails, see   \cite[Chapter VIII, \S 9, Theorem 2]{Fe71}:
\begin{equation}\label{mplus}
\mathfrak{m}_+(r)\sim \left({\mu_{+}(r)\over r}\right)\int^{+\infty}_{0} \Phi(z)\, \mathfrak{p}_+(z)\, dz, \quad r\to \infty, \quad\hbox{with}\quad \mathfrak{p}_+(z)=\alpha_+z^{-\alpha_+-1}.
\end{equation}
  The investigation of the behaviour  of $\mathfrak{m}_-$ is slightly more complicated.  However, the  argument here is quite standard, and  therefore we just sketch it.

Write  $\mathfrak{m}_-$ in the form
$$
\mathfrak{m}_-(r)=r^{-1}\int^{0}_{-\infty} \Phi\left({y\over r}\right)\mu(dy),\quad \Phi(x):={1\over x}\ell\left({1\over x}\right).
$$
It follows from (\ref{ell-as}) that there exists a  constant $C$ such that  $\Phi(x)\leq C  (-x)^{2/(1-2H)} $ for $(-x)$ small enough  and $\Phi(x)\leq C (-x)^{2/(3-2H)} $ for $(-x)$ large enough. Then, by \cite[Chapter VIII, \S 9, Theorem 2]{Fe71}, (see also Problem~30 in \S10 of the same Chapter)  we have for $A$ small enough and $B$ large enough
\begin{align*}
\int^{-Br}_{-\infty}\Phi\left({y\over r}\right)\mu(dy)&\leq Cr^{-2/(3-2H)}\int^{-Br}_{-\infty}(-y)^{2/(3-2H)}\mu(dy)
\\&
 \leq C^1_{H,\alpha_-} B^{2/(3-2H)}\mu_-(Br),
\end{align*}
\begin{align*}
\int^{0}_{-Ar}\Phi\left({y\over r}\right)\mu(dy)&\leq C r^{-2/(1-2H)}\int_{-Ar}^{0}(-y)^{2/(1-2H)}\mu(dy)
\\&
 \leq C^2_{H,\alpha_-} A^{2/(1-2H)}\mu_-(Ar),
\end{align*}
with some explicitly given constant $C^i_{H, \alpha_-}\in (0, \infty)$, $i=1,2$. We have
$$
\limsup_{r\to +\infty} A^{2/(1-2H)}{\mu_-(Ar)\over \mu_-(r)}=A^{2/(1-2H)-\alpha_-} \limsup_{r\to +\infty}{L_-(Ar)\over L_-(r)}=A^{2/(1-2H)-\alpha_-}
$$
and, similarly,
$$
\limsup_{r\to +\infty} B^{2/(3-2H)} {\mu_-(Br)\over \mu_-(r)}=B^{2/(3-2H)-\alpha_-} .
$$
Then, by condition (\ref{apm}), for every $\eps>0$ one can choose $A$ and $B$ in such a way that
\begin{equation}\label{41}
\limsup_{r\to+\infty}{1\over \mu_-(r)}\left(\int_{-\infty}^0   \Phi\left({y\over r}\right)\mu(dy)- \int_{-Br}^{-Ar}\Phi\left({y\over r}\right)\mu(dy)\right)\leq \eps.
\end{equation}
Further, the function $\Phi$ is continuous and positive on $[-B,-A]$. Therefore, there exists a piece-wise constant function $\Phi_\eps$ such that
\begin{equation}\label{42}
(1-\eps)\Phi_\eps\leq \Phi\leq (1+\eps)\Phi_\eps.
\end{equation}
Clearly, one has for every segment $(a,b]\subset \real_-$
$$
\mu((ra,rb])\sim \mu_-(r)\int_a^b \mathfrak{p}_-(z)\, dz, \quad r\to \infty,\quad\hbox{with}\quad \mathfrak{p}_-(z)=\alpha_-(-z)^{-\alpha_--1}.
$$
Therefore,
$$
{1\over \mu_-(r)} \int_{-Br}^{-Ar}\Phi_\eps\left({y\over r}\right)\mu(dy)\to \int_{-B}^{-A} \Phi_\eps(z)\, \mathfrak{p}_-(z)\, dz, \quad r\to +\infty.
$$
Combined with (\ref{41}) and (\ref{42}), this gives
$$
\limsup_{r\to+\infty}{1\over \mu_-(r)} \int_{-\infty}^0  \Phi\left({y\over r}\right)\mu(dy)\leq \eps+{1+\eps\over 1-\eps} \int_{-\infty}^{0} \Phi(z)\, \mathfrak{p}_-(z)\, dz.
$$
One can write in the same fashion the lower bound for  $\liminf_{r\to+\infty}$ (we omit the calculation). Then, since $\eps>0$ is arbitrary, we  finally arrive at
$$
\mathfrak{m}_-(r)\sim \left({\mu_{-}(r)\over r}\right)\int_{-\infty}^{0} \Phi(z)\, \mathfrak{p}_-(z)\, dz, \quad r\to \infty.
$$
This and (\ref{mplus}) give that the function
$$
\mathfrak{m}(r)\sim{1\over r}\left(\mu_{-}(r)\int_{-\infty}^{0} \Phi(z)\, \mathfrak{p}_-(z)\, dz+ \mu_{+}(r)\int^{+\infty}_{0} \Phi(z)\, \mathfrak{p}_+(z)\, dz\right), \quad r\to +\infty,
$$
clearly belongs to the class $\Ld$,  and thus the statement i) of Theorem \ref{t-main2} holds true.

Consider, for instance, the ``$\alpha$-stable-like'' case
$$
\mu_-(r)\sim C_-r^{-\alpha}, \quad \mu_+(r)\sim C_+r^{-\alpha},\quad \hbox{with}\quad \alpha\in \Big({2\over 3-2H}, {2\over 1-2H}\Big).
$$
Then (\ref{main-es2}) and the above calculations give
\begin{equation}\label{main-es4}
p_t(x)\sim t^{1-\alpha(1/2-H)} x^{-\alpha-1}\int_\Re\Phi(z)\mu_{\alpha,C_-, C_+}(dz),\quad x\to +\infty
\end{equation}
with
$$
\mu_{\alpha,C_-, C_+}(dz)=\alpha|z|^{-\alpha-1}\Big(C_-1_{\Re_-}(z)+C_+1_{\Re_+}(z)\Big)\, dz.
$$
Note that the formal expression for $\mu_{\alpha,C_-, C_+}$ coincides with that for the L\'evy measure of an $\alpha$-stable distribution, although for $\alpha\in (2,2/(1-2H))$ an ``$\alpha$-stable distribution'' itself does not exist.

In contrast to (\ref{main-es3}),   formula (\ref{main-es4}) contains explicitly the time parameter $t$. In addition, the polynomial ``shape'' of the expression in the right-hand side of (\ref{main-es4}) depends on $\alpha$, i.e.,  on the ``shape'' of the tails of the  L\'evy measure $\mu$.
\end{example}

\begin{example}\label{exa2}  When the L\'evy measure $\mu$ is ``extremely heavy-tailed'' in the sense explained above, the function $\mathfrak{m}$ may fail to belong to the class $\Ld$ \, at all.  Consider the measure
$$
\mu(dx)= \delta_{-1}(dx)+ \sum_{k\geq 0} 2^{k-2k/(1-2H)} \delta_{2^{k}}(dy).
$$
Then  $\mu(\real_-)>0$, \eqref{m2r} fails, whereas \eqref{esint} is satisfied:
$$
\int_\real |y|^{2/(3-2H)} \mu(dy)= 1+ \sum_{k\geq 0} 2^{2k/(3-2H)} 2^{k-2k/(1-2H)}
=
1+ \sum_{k\geq 0} 2^{-k\cdot \frac{1+4H(2-H)}{(3-2H)(1-2H)}}<\infty.
$$
Using \eqref{c1}, we can write $\mathfrak{m} (r)$ explicitly:
$$
\mathfrak{m} (r)= -\ell(-r) + c_H r^{-(3-2H)/(1-2H)} \sum_{k:\, 2^k \leq r\Gamma(H+1/2)} 2^k, \quad r\geq 0,
$$
where $c_H$ is defined in \eqref{CH}. To shorten the notation, put $c:= (\Gamma(H+1/2))^{-1}$.
We have for $r_n:= 2^n c$  and $r_n':= (2^n-1)c$, respectively,
$$
\mathfrak{m} (r_n) = \mathfrak{m}(2^nc) =-\ell (-2^nc) +c_H (2^n c)^{- (3-2H)/(1-2H)} \sum_{k=0}^n 2^k
$$
and
$$
\mathfrak{m} (r_n')  =   \mathfrak{m}((2^n-1)c) =-\ell (-(2^n-1)c) +c_H ((2^n-1) c)^{- (3-2H)/(1-2H)} \sum_{k=0}^{n-1} 2^k.
$$
Note that
$$
\ell (-(2^n-1)c)\sim \ell (-2^nc) \sim -c_H (-2^n c)^{- (3-2H)/(1-2H)} \quad\text{and}\quad  \sum_{k=0}^n 2^k \sim 2^{n+1}
$$
as $n\to\infty$.
Therefore,
$$
\lim_{n\to\infty} \frac{\mathfrak{m}((2^n-1)c)}{\mathfrak{m}(2^nc)} =  \frac{1}{2},
$$
and $\mathfrak{m}\notin\Ld$.
\end{example}

From these two examples one can see that in the ``extremely heavy-tailed'' case the asymptotic behaviour of the distribution density of  $Z_t^H$ is more sensitive with respect to the behaviour of the ``tails'' of the L\'evy measure $\mu$ than in the case where the integrability condition (\ref{m2r}) holds true. If these tails are regularly varying, then (\ref{main-es2}) holds true with the right hand side depending both on $t$ and on the ``shape'' of the ``tails'' of $\mu$. On the other hand, when the ``tails'' of $\mu$ are both  ``heavy'' and ``irregular'', the function $\mathfrak{m}$ may fail to belong to the class $\Ld$, which means that we can not apply Theorem \ref{t-main2} at all.

\section*{Appendix I: Existence of integral  (\ref{yt})}
\addcontentsline{toc}{section}{Appendix}

\begin{proposition}\label{p21} Let  $0<H<1, H\not=1/2$. Then the integral \eqref{yt} is well defined for every $t\in \Re$ if, and only if, the L\'evy measure $\mu$  satisfies (\ref{esint}).

\end{proposition}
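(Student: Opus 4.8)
The plan is to apply the Rajput--Rosinski integrability criterion \cite[Theorem 2.7]{RR89}: since $Z$ has no Gaussian component and (after our normalization) no drift, the integral $\int_\Re f(t,s)\,dZ_s$ is well defined precisely when the two conditions \eqref{exi1A} and \eqref{exi2A} hold with $\mathcal{C}(t,s)\equiv\Re$ and $I=\Re$, the diffusion-type requirement being vacuous as $b=0$. By the scaling identity \eqref{f} it suffices to treat $t=1$, the case $t<0$ being symmetric and $t=0$ trivial; I write $f(s):=f(1,s)$. The three features of $f$ that drive everything are: $f$ is supported on $(-\infty,1]$; it decays at $-\infty$ like $f(s)\sim c_H'(-s)^{H-3/2}$ with $c_H'=(H-1/2)/\Gamma(H+1/2)$ (by a first-order expansion of $(1-s)^{H-1/2}-(-s)^{H-1/2}$); and for $0<H<1/2$ it carries the integrable singularities $|f(s)|\asymp(-s)^{H-1/2}$ near $0^-$ and $(1-s)^{H-1/2}$ near $1^-$. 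In particular one checks $\int_\Re f(s)^2\,ds<\infty$ for every $0<H<1$, so \eqref{f11} holds automatically.

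The core computation is condition \eqref{exi1A}. Splitting $\mu$ at $|u|=1$, the small-jump part is dominated by $f(s)^2\int_{|u|\le1}u^2\,\mu(du)$, which is integrable in $s$ by $\int f^2<\infty$ and imposes no restriction on $\mu$. For the large-jump part I use Tonelli to rewrite it as $\int_{|u|>1}G(u)\,\mu(du)$ with $G(u):=\int_\Re\big(|f(s)u|^2\wedge1\big)\,ds$, and then estimate $G(u)$ as $|u|\to\infty$. In the tail the threshold $|f(s)|=1/|u|$ is crossed at $-s\asymp(c_H'|u|)^{2/(3-2H)}$, so the region where $|f(s)u|\ge1$ has length of order $|u|^{2/(3-2H)}$, while a direct integration of $f(s)^2u^2$ over the complementary tail region produces the same order; the singularities present when $H<1/2$ contribute only a bounded length. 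Hence $G(u)\asymp|u|^{2/(3-2H)}$, so \eqref{exi1A} is finite if and only if \eqref{esint} holds. This already settles necessity, since the lower bound $G(u)\ge c\,|u|^{2/(3-2H)}$ forces $\int_{|u|>1}|u|^{2/(3-2H)}\,\mu(du)<\infty$.

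For sufficiency it remains to verify the compensator condition \eqref{exi2A}, which I regard as the main obstacle, because of the absolute value and the competing truncations at level $1$ in the variables $u$ and $f(s)u$. Writing the inner integrand as $f(s)u\big(1_{|f(s)u|\le1}-1_{|u|\le1}\big)$, it is supported on the symmetric difference of the two truncation sets, which I treat in the two regimes $|f(s)|\le1$ and $|f(s)|>1$. On $\{|f(s)|>1\}$ --- a bounded set, nonempty only for $H<1/2$ --- I use $|f(s)u|\le f(s)^2u^2$ on the relevant range and reduce to $\int_{|f(s)|>1}f(s)^2\,ds<\infty$. On $\{|f(s)|\le1\}$ the relevant range is $1<|u|\le1/|f(s)|$, and Tonelli turns the contribution into $\int_{|u|>1}|u|\,\widetilde G(u)\,\mu(du)$ with $\widetilde G(u):=\int_{\{|f(s)|\le1/|u|\}}|f(s)|\,ds\asymp|u|^{(2H-1)/(3-2H)}$; since $1+(2H-1)/(3-2H)=2/(3-2H)$, this is again controlled by \eqref{esint}. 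Combining the two regimes gives \eqref{exi2A} under \eqref{esint}, completing the ``if'' direction.

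The delicate points I would carry out with care are the uniform tail expansion of $f$ at $-\infty$ and the verification that, for $0<H<1/2$, the neighbourhoods of the singularities at $0^-$ and $1^-$ contribute only bounded amounts to both $G$ and $\widetilde G$. This is what guarantees that the governing exponent is genuinely $2/(3-2H)$, produced by the slow power decay of $f$ at $-\infty$ rather than by the local blow-up of the kernel, and hence that the single integrability condition \eqref{esint} is both necessary and sufficient.
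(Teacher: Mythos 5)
For the case $0<H<1/2$ --- the only case the paper actually writes out --- your argument is correct and follows essentially the same route as the paper: verify the Rajput--Rosinski criterion of \cite[Theorem~2.7]{RR89}, reduce to $t=1$ by scaling, and power-count using the three asymptotic regimes of $f$ (the $(-s)^{H-3/2}$ decay at $-\infty$ and the integrable singularities at $0^-$ and $1^-$). The only real difference is organizational: you apply Tonelli first and work with the profiles $G(u)$ and $\widetilde G(u)$, where the paper splits the $(s,u)$-domain into four regions and computes each piece; your packaging makes the necessity direction ($G(u)\geq c|u|^{2/(3-2H)}$ forces \eqref{esint}) slightly more transparent, but the computations are the same. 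Two small inaccuracies that do not affect this case: the set $\{|f(s)|>1\}$ need not be empty for $H>1/2$ (since $\Gamma(H+1/2)<1$ there), and the sign of your constant $c_H'$ is actually the correct one, the paper's displayed asymptotic for $f$ at $-\infty$ having the sign flipped.

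There is, however, a concrete step that fails for $1/2<H<1$, which your write-up claims to cover. Your compensator estimate rests on $\widetilde G(u)=\int_{\{|f(s)|\le 1/|u|\}}|f(s)|\,ds\asymp|u|^{(2H-1)/(3-2H)}$, but for $H>1/2$ one has $f(s)\asymp(-s)^{H-3/2}$ with $H-3/2>-1$, so $\int_{-\infty}^{-1}|f(s)|\,ds=\infty$ and $\widetilde G(u)\equiv+\infty$; the stated asymptotic is valid only for $H<1/2$. The failure is not cosmetic: the crude bound $|f(s)u|\le|f(s)||u|$ discards exactly the cancellation that makes \eqref{exi2A}/\eqref{exi2} delicate when $H>1/2$. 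Indeed, on $\{1<|u|\le 1/|f(s)|\}$ the inner $\mu$-integral behaves like $f(s)\int_{|u|>1}u\,\mu(du)$, whose integral over $s$ converges absolutely only if $\int|f|<\infty$, so for $H>1/2$ the compensator condition hinges on how the constant $\int_{|u|>1}u\,\mu(du)$ interacts with the drift normalization rather than on \eqref{esint} alone. The paper hides this under ``the calculations in the case $1/2<H<1$ are analogous'', which is itself questionable; but your proposal does not merely omit that case --- it asserts an estimate that is false there. You should either restrict the compensator argument explicitly to $H<1/2$ (mirroring what the paper actually proves) or supply a genuinely different treatment of \eqref{exi2A} for $H>1/2$ that exploits the cancellation.
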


\begin{proof}
We consider the case $0<H<1/2$,    the calculations in the case  $1/2<H <1$ are  analogous.   We check the necessary and sufficient condition for the existence of \eqref{yt}  formulated in  \cite[Theorem~2.7]{RR89}. In our case these conditions can be rewritten as
\begin{equation}
\int_{-\infty}^1 f^2(s)ds<\infty, \label{exi3}
\end{equation}
\begin{equation}
I_1:= \int_{-\infty}^1 \int_\Re \left( 1\wedge |f(s)x|^2\right) \mu(dx)ds<\infty,\label{exi1}
\end{equation}
and
    \begin{equation}
    I_2:= \int_{-\infty}^1 \Big| \int_\Re \left( \tau(f(s)x) -f(s)\tau(x)\right) \mu(dx)\Big| ds<\infty,\label{exi2}
    \end{equation}
 where
\begin{equation}
\tau(x)= \begin{cases} x,& \text{if} \quad  |x|\leq 1,
\\ \frac{x}{|x|}, & \text{if} \quad |x|>1,
\end{cases}
\end{equation}
and  $f(s):=f(1,s)$. Clearly, \eqref{exi3}  is satisfied. We  show that a) \eqref{exi1} and \eqref{esint} are equivalent, b)  \eqref{exi2} follows from \eqref{esint}.

a) Split
\begin{equation}
I_1= I_{11}+I_{12}+ I_{13}+ I_{14},\label{s1}
 \end{equation}
 where
 $$
 I_{11}:= \int_{-\infty}^{-1} \int_{|uf(s)|\leq 1}...,  \quad   I_{12}:= \int_{-1}^{1} \int_{|uf(s)|\leq 1}... ,
 $$
 $$
 I_{13}:= \int_{-\infty}^{-1} \int_{|uf(s)| > 1}... , \quad
 I_{14}:= \int_{-1}^{1} \int_{|uf(s)| >1}... ,
  $$
 and estimate the integrals $I_{1i}$, $i=1,..,4$, separately.

Since
$f(s)\sim - \frac{2H-1}{2\Gamma(H+1/2)} |s|^{H-3/2}$ as $s\to-\infty$,  $f(s)\sim -\frac{1}{\Gamma(H+1/2)}|s|^{H-1/2}$, as $s\to 0-$, and $f(s)=\frac{1}{\Gamma(H+1/2)}(1-s)^{H-1/2} $ for $0\leq s<1$,  to check the finiteness of $I_1$  it is enough to substitute $f(s)$ in  the regions $(-\infty, -1]$ and  $(-1,0)$ with, respectively,  $-|s|^{H-3/2}$ and $-|s|^{H-1/2}$, and to check the finiteness of the integrals $\tilde{I}_{11}:= \int_1^\infty \int_{|x|\leq s^{3/2-H}}...$,  $\tilde{I}_{12}:= \int_{0}^1 \int_{|x|\leq s^{1/2-H}}...$, $\tilde{I}_{13}:= \int_1^\infty \int_{|x|> s^{3/2-H}}...$, and $\tilde{I}_{14}:= \int_0^1\int_{|x|> s^{1/2-H}}...$.

We get:
\begin{align*}
\tilde{I}_{11}&= \int_1^\infty   \frac{1}{s^{3-2H}} \int_{|x|\leq s^{3/2-H}} |x|^2 \mu(dx)ds
\\&
= \int_{1}^{\infty}  \frac{1}{s^{3-2H}} \int_{|x|\leq 1} |x|^2 \mu(dx)ds+ \int_{1}^{\infty}  \frac{1}{s^{3-2H}} \int_{1< |x|\leq s^{3/2-H}} |x|^2 \mu(dx)ds
\\&
=\frac{1}{2-2H} \left(\int_{|x|\leq 1} |x|^2 \mu(dx) +  \int_{|x|\geq 1}|x|^{2/(3-2H)} \mu(dx)\right);
\end{align*}
\begin{align*}
\tilde{I}_{12}&= \int_{0}^1 s^{2H-1} \int_{|x|\leq s^{1/2-H}} |x|^2 \mu(dx)ds \leq \frac{1}{2H}\int_{|x|\leq 1 } |x|^2\mu(dx);
\end{align*}
\begin{align*}
\tilde{I}_{13}&=\int_1^\infty  \int_{|x|\geq s^{3/2-H} } \mu(dx)ds
= \int_{|x|\geq 1} (|x|^{2/(3-2H)}-1) \mu(dx);
\end{align*}
\begin{align*}
\tilde{I}_{14}&= \int_{0}^1 \int_{|x|\geq s^{1/2-H}} \mu(dx)ds
=  \int_{|x|\leq 1} |x|^{2/(1-2H)} \mu(dx) +  \int_{|x|\geq 1} \mu(dx).
\end{align*}
Therefore, $I<\infty$ if and only if \eqref{esint} holds true.

b) Split  $I_2:= I_{21}+I_{22}$,
    where     $I_{21}:= \int_{-\infty}^{1} \int_{|uf(s)|\leq 1}... $ and   $I_{22}:= \int_{-\infty}^{1} \int_{|uf(s)|\geq 1}... $.

Observe that
    $$
    \int_{|x|\leq 1/|u| } (ux-u \tau(x)) \mu(dx)  =\int_{1\leq |x|\leq 1/|u|} (ux-\frac{ux}{|x|}) \mu(dx)\leq  2\int_{1\leq |x|\leq 1/|u|}|ux|\mu(dx).
    $$
Then
\begin{equation}
I_{21}\leq 2 \int_{-\infty}^{-1}\int_{1\leq |x|\leq 1/|f(s)| } |f(s)x|\mu(dx)ds.\label{i21}
\end{equation}
To estimate the right-hand side of \eqref{i21} it is enough to estimate
    \begin{align*}
    \tilde{I}_{21}&:= \int_1^\infty \int_{1\leq|x|\leq s^{3/2-H}} \frac{|x|}{s^{3/2-H}} \mu(dx)ds
    =\frac{2}{1-2H} \int_{|x|\geq 1} |x|^{2/(3-2H)}  \mu(dx).
       \end{align*}
Thus, \eqref{esint} implies the finiteness of $\tilde{I}_{21}$, and, consequently, of \eqref{i21}.

To estimate $I_{22}$ observe that
    \begin{align*}
    \int_{|x|\geq 1/|u|} &( \frac{xu}{|xu|}-u\tau(x)) \mu(dx)
    \\&=
    \int_{|x|\geq \max(1/|u|,1)} ( \frac{xu}{|xu|}-u\frac{x}{|x|}) \mu(dx)
    + \int_{1/|u|\leq |x|\leq 1} (\frac{xu}{|xu|}-ux)\mu(dx).
        \end{align*}
Then
    \begin{align*}
    I_{22} &\leq 2\Big(\int_{-\infty}^{-1} \int_{|x|\geq 1/|f(s)|} \mu(dx)ds+ \int_{-1}^{1} \int_{|x|\geq 1} |f(s)|\mu(dx)ds
    \\&
    \quad \quad \quad + \int_{-1}^{1} \int_{1/|f(s)|\leq |x|\leq 1} |f(s)|\mu(dx)ds\Big)
    \\&
    \leq C_1 \Big( \int_1^\infty \int_{|x|\geq s^{3/2-H}} \mu(dx)ds  + \int_{-1}^1 |f(s)|ds\int_{|x|\geq 1} \mu(dx)
    \\&
    \quad \quad \quad +
    \int_0^1 \int_{s^{1/2-H}\leq |x|\leq 1}s^{H-1/2}\mu(dx)ds\Big)
    \\&
    \leq  C_2 \Big( \int_{|x|\geq 1} |x|^{2/(3-2H)}\mu(dx)+  \int_{|x|\geq 1}\mu(dx)    + \int_{|x|\leq 1} |x|^{(1+2H)/(1-2H)}\mu(dx)\Big),
    \end{align*}
and the finiteness of the right-hand side  is implied by  \eqref{esint}.
\end{proof}

\section*{Appendix II: Existence of the distribution density}\label{sB}
\addcontentsline{toc}{section}{Appendix}

\begin{proposition}\label{p22}  Let  $0<H<1, H\not=1/2$. Then, under (\ref{esint}) and our standing assumption (\ref{mu0}), the integral (\ref{yt}) possesses for any $t\not=0$  the distribution density $p_t\in C_b^\infty(\real)$.
\end{proposition}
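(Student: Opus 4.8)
The plan is to show that the characteristic function $\phi(t,z)=e^{\Psi(t,-z)}$ from (\ref{chint})--(\ref{psi1}) decays super-polynomially as $|z|\to\infty$, and then to read off $p_t$ and all its derivatives by Fourier inversion. Existence of the integral (\ref{yt}) under (\ref{esint}) is guaranteed by Proposition~\ref{p21}, so $\Psi(t,\cdot)$ is well defined. Since the compensator term $izf(t,s)u\,1_{|u|\le1}$ in (\ref{psi1}) is purely imaginary, the modulus of $\phi$ depends only on the real part of the integrand, namely
$$
|\phi(t,z)|=e^{-g(t,z)},\qquad g(t,z):=\int_\Re\int_\Re\bigl(1-\cos(zf(t,s)u)\bigr)\,\mu(du)\,ds\ \ge\ 0 .
$$
Everything therefore reduces to a lower bound on $g$: once we know $g(t,z)/\ln|z|\to\infty$ as $|z|\to\infty$, the function $|z|^k|\phi(t,z)|$ is integrable for every $k\ge0$, and the conclusion follows.

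The core of the argument is to identify which part of the domain feeds genuine high-frequency content into $g$. Fix $t\neq0$. Because $\mu(\Re)>0$ by (\ref{mu0}) and a L\'evy measure does not charge $\{0\}$, there is an interval $[u_0,u_1]$ bounded away from the origin with $0<\mu([u_0,u_1])<\infty$; its position relative to $0$ is immaterial since $1-\cos$ is even. The decisive point is that the \emph{singular} regions $s\to0^-$ and $s\to t^-$, where $f(t,s)\to\pm\infty$, do \emph{not} help: the substitution $w=f(t,s)$ there produces a weight $\sim|w|^{-\beta}$ with $\beta=(3-2H)/(1-2H)>1$ (cf.\ (\ref{ell-as}) and the explicit form of $\ell$), and after rescaling $\eta=|zu|w$ the resulting integral stays bounded in $z$. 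Growth comes instead from $s\to-\infty$, where $f(t,s)\to0$ \emph{slowly}, $|f(t,s)|\sim c_t|s|^{H-3/2}$ with $c_t>0$ and $H-3/2\in(-3/2,-1)$ (the expansion used in the proof of Proposition~\ref{p21}). Restricting $g$ to $s\in(-\infty,b)$ and $u\in[u_0,u_1]$ and changing variables by means of the inverse derivative $\ell$ of (\ref{ell}), whose near-zero asymptotics $-\ell(y)\sim\hat c_H(-y)^{-\gamma}$, $\gamma:=\tfrac{5-2H}{3-2H}\in(1,2)$, form the second line of (\ref{ell-as}), the inner integral becomes, after the rescaling $\eta=|zu|\,v$,
$$
\int_{-\infty}^{b}\!\bigl(1-\cos(zf(t,s)u)\bigr)\,ds=C\,|zu|^{\gamma-1}\!\int_0^{\,C'|zu|}\!\bigl(1-\cos\eta\bigr)\,\eta^{-\gamma}\,d\eta ,
$$
for positive constants $C,C'$. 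Since $1<\gamma<2$, the integral $\int_0^\infty(1-\cos\eta)\,\eta^{-\gamma}\,d\eta$ converges to a strictly positive value, so the right-hand side is $\sim\mathrm{const}\cdot|zu|^{\gamma-1}$. Integrating against $\mu$ on $[u_0,u_1]$ and recalling $\gamma-1=2/(3-2H)$ gives the genuine lower bound
$$
g(t,z)\ \ge\ c_t\,|z|^{2/(3-2H)},\qquad |z|\ge R_t,
$$
for suitable $c_t,R_t>0$. This step—turning the oscillatory asymptotics into an honest lower bound uniform in $u\in[u_0,u_1]$, and correctly singling out $s\to-\infty$ rather than the tempting blow-up regions—is the only real difficulty of the proof.

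With this estimate in hand the remainder is routine. We have $|\phi(t,z)|\le\exp(-c_t|z|^{2/(3-2H)})$ for large $|z|$, which decays faster than any power of $|z|^{-1}$; hence $\int_\Re|z|^k|\phi(t,z)|\,dz<\infty$ for all $k\ge0$. By Fourier inversion
$$
p_t(x)=\frac{1}{2\pi}\int_\Re e^{-izx}\phi(t,z)\,dz
$$
is well defined and is the density of $Z_t^H$, and the moment bounds just obtained justify differentiating under the integral sign, so that $p_t^{(k)}(x)=\tfrac{1}{2\pi}\int_\Re(-iz)^k e^{-izx}\phi(t,z)\,dz$ with $\sup_x|p_t^{(k)}(x)|\le\tfrac{1}{2\pi}\int_\Re|z|^k|\phi(t,z)|\,dz<\infty$; thus $p_t\in C_b^\infty(\real)$. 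The same change-of-variables computation, carried out with the truncation $\mathcal{C}(t,s)=\{f(t,s)u\le\lambda\}$ in force, also produces the quadratic and logarithmic lower bounds on $\Theta(t,z,\cdot)$ that are invoked when verifying \textbf{H4} and \textbf{H5} in the proof of Proposition~\ref{pr1}.
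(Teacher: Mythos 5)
Your proof is correct and reaches the same conclusion by the same overall strategy as the paper --- lower-bound $-\mathrm{Re}\,\Psi(t,-z)=\int_I\int_{\real}(1-\cos(zf(t,s)u))\,\mu(du)\,ds$ using only the contribution of $s\to-\infty$ and an interval $[u_0,u_1]$ carrying positive $\mu$-mass, deduce that $|z|^k|\phi(t,z)|$ is integrable for every $k$, and invert the Fourier transform --- but the key estimate is obtained by a genuinely different route. The paper does not compute anything explicit: it invokes the general Proposition~\ref{p31}(b) (i.e.\ \cite[Lemma 3.3]{KK11}), which applies to any positive convex kernel satisfying the sub-exponential growth condition \eqref{hlim} and yields only the bound \eqref{exh2}, namely $\geq c\ln|z|$ for \emph{every} $c>0$ once $|z|$ is large enough; the arbitrariness of $c$ is what gives super-polynomial decay. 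You instead exploit the specific polynomial decay $|f(s)|\sim c\,|s|^{H-3/2}$ at $-\infty$, change variables via $\ell$ using the second line of \eqref{ell-as}, and obtain the much stronger power-law bound $g(t,z)\geq c_t|z|^{2/(3-2H)}$; your identification of $\gamma-1=2/(3-2H)$ is right, and this exponent is sharp (it is the borderline stability index below which \eqref{esint} fails). Two small points: the displayed identity for $\int_{-\infty}^b(1-\cos(zf(t,s)u))\,ds$ should be an inequality obtained by choosing $b$ so negative that $-\ell\geq(1-\eps)\hat c_H(-y)^{-\gamma}$ on $(f(b),0)$, as you indicate in words; and your discussion of the blow-up regions $s\to0^-$, $s\to t^-$ is tailored to $0<H<1/2$, whereas the proposition also covers $1/2<H<1$ --- there the kernel is bounded and only the $s\to-\infty$ computation (with $f(s)\to0^+$) is needed, so your argument carries over with trivial sign changes. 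Net effect: your proof buys an explicit, quantitatively stronger decay rate at the price of being specific to this kernel, while the paper's buys generality (any convex sub-exponentially decaying kernel) at the price of a merely logarithmic, non-explicit bound.
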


\begin{remark}  In the non-Markov case  $H\not=1/2$, the kernel $f(t,s)$ provides a strong ``smoothifying'' effect in the sense that the  weakest possible  non-degeneracy assumption (\ref{mu0}) is already sufficient for the integral (\ref{yt}) to possess a smooth distribution density. We refer to \cite{KK11}, Section 3, for the detailed discussion of various forms of the ``smoothifying'' effect for L\'evy driven stochastic integrals with deterministic kernels.
\end{remark}
For the proof we use the following statement, see \cite[Lemma 3.3]{KK11}.

\begin{proposition} \label{p31}
a) For a positive function $h(s)$  having a continuous non-zero derivative on some interval $[a,b]\subset \Re$, one has \begin{equation}
    \int_a^b \big(1-\cos (h(s)x)\big) ds\geq c (x^2\wedge 1). \label{exh1}
    \end{equation}
b) For a positive convex on $(-\infty,b)\subset \Re$ function $h(s)$, satisfying
\begin{equation}
\lim_{s\to -\infty} e^{-\gamma s}h(s)=+\infty\quad \text{ for all}\quad \gamma>0,\label{hlim}
\end{equation}
 one has
    \begin{equation}
    \int_{-\infty}^b \left(1-\cos (x h(s))\right)ds \geq c \ln |x|\label{exh2}
    \end{equation}
for all  $c>0$ and $|x|$ big enough.
\end{proposition}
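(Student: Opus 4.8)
The plan is to handle both parts via the substitution $y=h(s)$, which is legitimate because in each case the hypotheses force $h$ to be strictly monotone. For part a), a continuous non-vanishing $h'$ on the compact $[a,b]$ keeps a fixed sign, so $h$ is strictly monotone and $0<m\le|h'|\le M<\infty$ there; changing variables gives
$$\int_a^b\big(1-\cos(h(s)x)\big)\,ds=\int_J\big(1-\cos(yx)\big)\frac{dy}{|h'(h^{-1}(y))|}\ \ge\ \frac1M\,g(x),\qquad g(x):=\int_J\big(1-\cos(yx)\big)\,dy,$$
where $J=h([a,b])$ has length $L>0$, and it remains to show $g(x)\ge c\,(x^2\wedge1)$. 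Near $x=0$ the expansion $1-\cos(yx)=\tfrac12 y^2x^2+O(x^4)$ gives $g(x)/x^2\to\tfrac12\int_Jy^2\,dy>0$; for large $|x|$ one has $g(x)=L-\big(\sin(y_2x)-\sin(y_1x)\big)/x\ge L/2$; and on compact subsets of $(0,\infty)$ the continuous positive $g$ is bounded below. Using evenness of $g$ for $x<0$, the ratio $g(x)/(x^2\wedge1)$ is therefore bounded away from $0$, which finishes a).

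For part b) I would first read off the structural consequences of convexity: $h'$ is non-decreasing, and positivity of $h$ rules out a positive limiting slope, leaving only that $h$ increases with $h(s)\downarrow L\ge0$ as $s\to-\infty$, or $h(s)\to+\infty$. In the latter two degenerate cases the integrand stays non-negligible over an infinite range and the integral equals $+\infty$, so the bound is trivial; the genuine case is $L=0$. There the assumption $e^{-\gamma s}h(s)\to\infty$ for all $\gamma>0$ is precisely $\ln h(s)=o(|s|)$, i.e.\ sub-exponential decay, which inverted reads $|h^{-1}(y)|\gg\ln(1/y)$ as $y\to0+$.

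The core estimate uses $1-\cos\theta\ge1$ on $\{\cos\theta\le0\}$:
$$\int_{-\infty}^b\big(1-\cos(xh(s))\big)\,ds\ \ge\ \mathrm{Leb}\{s<b:\cos(xh(s))\le0\}=\int_{\{y:\,\cos(xy)\le0\}}S'(y)\,dy,$$
where $S=h^{-1}$ and $S'(y)=1/h'(h^{-1}(y))$ is \emph{decreasing} by convexity. On each period $I_k=\big(2\pi k/|x|,2\pi(k+1)/|x|\big)$ the set $\{\cos(xy)\le0\}$ is an interval of length $\pi/|x|$ on which $S'\ge S'(2\pi(k+1)/|x|)$; bounding this value below by $\tfrac12\int_{I_{k+1}}S'$ and telescoping gives
$$\int_{-\infty}^b\big(1-\cos(xh(s))\big)\,ds\ \ge\ \tfrac12\big(S(2\pi K/|x|)-S(2\pi/|x|)\big)\ \ge\ \tfrac12\,\big|h^{-1}(2\pi/|x|)\big|-C,$$
with $2\pi K/|x|\uparrow h(b)$. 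The sub-exponential bound then yields $|h^{-1}(2\pi/|x|)|\gg\ln|x|$, which is exactly $\int_{-\infty}^b(1-\cos(xh(s)))\,ds\ge c\ln|x|$ for every $c>0$ and $|x|$ large.

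The step I expect to be the main obstacle is precisely this oscillatory estimate in b). The naive split $\int(1-\cos)=\int1-\int\cos$ diverges term by term, and the more obvious route — bounding $\big|\int_{a'}^b\cos(xh(s))\,ds\big|\le 2/(|x|\,h'(a'))$ by the second mean value theorem and optimising over $a'$ — forces one to prove that \emph{$h'$} decays sub-exponentially, which does not follow cleanly from sub-exponential decay of $h$ together with convexity. The decisive trick is to pass to the decreasing density $S'$ and compare across consecutive periods, collapsing the whole estimate to the single quantity $|h^{-1}(2\pi/|x|)|$, into which the sub-exponential decay of $h$ feeds directly, with no hypothesis on $h'$ beyond monotonicity. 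A secondary, routine point is verifying that in the intended application the kernel $uf(s)$ is convex and sub-exponentially decaying as $s\to-\infty$, which is where the explicit asymptotics of $f$ enter.
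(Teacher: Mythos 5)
The paper does not actually prove Proposition~\ref{p31}; it is imported verbatim from \cite[Lemma 3.3]{KK11}, so your argument has to stand on its own --- and for the most part it does. Part a) is correct: a continuous non-vanishing derivative on a compact interval forces strict monotonicity and $0<m\le|h'|\le M$, the substitution $y=h(s)$ is legitimate, and your three-regime analysis of $g(x)=\int_J(1-\cos(yx))\,dy$ (quadratic behaviour near $0$ with $\int_J y^2\,dy>0$, the bound $g(x)\ge |J|/2$ for $|x|$ large, continuity and strict positivity in between) is complete. In part b) the core estimate is also correct and is the right mechanism: minorizing $1-\cos$ by the indicator of $\{\cos\le0\}$, passing to $S=h^{-1}$, whose a.e.\ derivative $S'=1/(h'\circ h^{-1})$ is non-increasing by convexity, and comparing each half-period against the next full period telescopes to $\tfrac12\,|h^{-1}(2\pi/|x|)|-C$; condition (\ref{hlim}) then gives $|h^{-1}(y)|\ge\gamma^{-1}\ln(1/y)$ for every $\gamma>0$ and $y$ small, which yields the bound $c\ln|x|$ with $c$ arbitrary. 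This is exactly what is needed in the application, where $h(s)=-t^{H-1/2}f(s)\sim c\,|s|^{H-3/2}\downarrow 0$ as $s\to-\infty$.

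The one genuine flaw is the dismissal of the case $h(s)\downarrow L>0$ with the claim that the integral is automatically $+\infty$. That is false: for $h(s)=L+e^{s}$, which is positive, convex and satisfies (\ref{hlim}), and for $x\in(2\pi/L)\integer$, one has $\cos(xh(s))=\cos(xe^{s})$ and hence $\int_{-\infty}^{b}(1-\cos(xh(s)))\,ds=\int_{0}^{xe^{b}}(1-\cos u)\,u^{-1}du=\ln|x|+O(1)$, which is finite and, worse, shows that the conclusion ``$\ge c\ln|x|$ for every $c>0$'' actually fails along this sequence. So the proposition as literally stated implicitly presumes $h(s)\to0$ as $s\to-\infty$ (the only case occurring in the paper); you should either restrict to that case explicitly or point out that when $\inf h>0$ the hypothesis (\ref{hlim}) becomes vacuous and the statement breaks down. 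The remaining degenerate case $h(s)\to+\infty$ really is trivial, since there $|h'|$ stays between two positive constants near $-\infty$ and each of the infinitely many oscillation periods contributes a fixed positive amount, so only the intermediate case needs this correction.
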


\begin{proof}[Proof of Proposition~\ref{p22}] Recall (cf. \eqref{chint}) that the characteristic function of $Z_t^H$ is of  the form $\phi(t,z)=e^{\Psi(t,-z)}$.
 For a fixed $t$, the function $h(s)=- t^{H-1/2} f(s)$ satisfies \eqref{hlim}
 with $b=0$. Since
 $\mu(\Re)>0$ (cf. \eqref{mu0}),  there exists $q>0$ such that
$$
Q:= \max \{ \mu((-\infty, -q]), \mu([q,\infty)\}>0.
$$
 Then using (\ref{exh2}) for $|z|$ large enough we get
$$
    -\mathrm{Re}\,\Psi (t,-z) \geq  t \int_{-\infty}^0 \int_{|u|\geq q} (1-\cos ( t^{H-1/2} f(s)u z))\mu(du)ds
     \geq t c Q \ln |q t^{H-1/2}z|.
$$
Since   $c>0$ is arbitrary, the function    $|z|^n|\phi(t,z)|=e^{\mathrm{Re}\Psi(t,-z)+n\ln |z|}$ is integrable in $z$ for any $n\geq 1$.
Therefore  the density $p_t$ is well defined and belongs to $C_b^\infty$ as the inverse Fourier transform of $\phi(t,z)$:
    \begin{equation}
    p_t(x)=\frac{1}{2\pi} \int_\Re  e^{-izx} \phi(t,z)dz. \label{tp0}
    \end{equation}
    \end{proof}

\begin{remark}\label{rb2}
 Literally the same argument  implies that, if the truncation level $\lambda>0$ is chosen small enough,  then $|z|^n|\phi_1(t,z)|=e^{\mathrm{Re}\Psi_1(t,-z)+n\ln |z|}$ is integrable in $z$ for any $a>0$, $t>0$, see (\ref{phi11}). This implies the existence of $\tilde{p}_t(x)$, see (\ref{tp}).
\end{remark}

\textbf{Acknowledgement.}  The authors thank the referee for helpful remarks, and gratefully acknowledge the DFG Grant Schi~419/8-1. The first-named author also gratefully acknowledges the Scholarship of the President of Ukraine for young scientists (2011-2013).

\end{document}